\setlist[itemize,enumerate,description]{leftmargin=*}
\newcolumntype{H}{>{\setbox0=\hbox\bgroup}c<{\egroup}@{}}
\newtheorem{theorem}{Theorem}[section]
\newtheorem{lemma}[theorem]{Lemma}
\newtheorem{corollary}[theorem]{Corollary}
\newtheorem{proposition}[theorem]{Proposition}
\theoremstyle{definition}
\newtheorem{definition}[theorem]{Definition}
\newtheorem{setup}[theorem]{Set-up}
\newtheorem{remark}[theorem]{Remark}
\newtheorem{example}[theorem]{Example}
\title[Morelli-W\l odarczyk cobordism and examples of rooftop flips]{Morelli-W\l odarczyk cobordism and examples of rooftop flips}
\author[Barban]{Lorenzo Barban}
\address{Dipartimento di Matematica, Universit\`a degli Studi di Trento, via Sommarive 14, I-38123 Povo di Trento (TN), Italy}
\email{lorenzo.barban@unitn.it}
\author[Franceschini]{Alberto Franceschini}
\address{Dipartimento di Matematica, Universit\`a degli Studi di Trento, via Sommarive 14, I-38123 Povo di Trento (TN), Italy}
\email{alberto.bobech@gmail.com}
\subjclass[2010]{Primary 14L30; Secondary 14E30, 14L24, 14M17}
\thanks{}
\newcommand\ignore[1]{}
\DeclareMathOperator{\HH}{H}
\def\C{{\mathbb C}}
\def\P{{\mathbb P}}
\def\Q{{\mathbb Q}}
\def\Z{{\mathbb Z}}
\def\cE{{\mathcal E}}
\def\cL{{\mathcal L}}
\def\cO{{\mathcal{O}}}
\def\cY{{\mathcal Y}}
\def\Q{{\mathbb{Q}}}
\def\operatorname#1{\mathop{\rm #1}\nolimits}
\def\Exc{\operatorname{Exc}}
\def\Pic{\operatorname{Pic}}
\def\Spec{\operatorname{Spec}}
\def\codim{\operatorname{codim}}
\def\deg{\operatorname{deg}}
\def\Nef{{\operatorname{Nef}}}
\def\NU{{\operatorname{N^1}}}
\newcommand{\pb}{\ar@{}[dr]|{\text{\pigpenfont J}}}
\newcommand{\xleftrightarrow}[2][]{\ext@arrow 3359\leftrightarrowfill@{#1}{#2}}
\newcommand{\xdasharrow}[2][->]{
\tikz[baseline=-\the\dimexpr\fontdimen22\textfont2\relax]{
\node[anchor=south,font=\scriptsize, inner ysep=1.5pt,outer xsep=2.2pt](x){#2};
\draw[shorten <=3.4pt,shorten >=3.4pt,dashed,#1](x.south west)--(x.south east);
}}
\newcommand{\git}{\mathbin{/\mkern-6mu/}}
\begin{document}
\begin{abstract}
	We introduce the notion of rooftop flip, namely a small modification among normal projective varieties which is modeled by a smooth projective variety of Picard number 2 admitting two projective bundle structures. Examples include the Atiyah flop and the Mukai flop, modeled respectively by $\P^1\times \P^1$ and by $\P\left(T_{\P^2}\right)$. Using the Morelli-W\l odarczyk cobordism, we prove that any smooth projective variety of Picard number 1, endowed with a $\C^*$-action with only two fixed point components, induces a rooftop flip.
\end{abstract}
\maketitle
\tableofcontents

%%%%%%%%%%%%%%%%% inputs

%!TEX root = BF.tex

\section{Introduction}\label{sec:intro}
		
The relation between birational geometry and $\C^*$-actions has been deeply studied over the years (see for instance \cite{ReidFlip}, \cite{Thaddeus} and in recent years \cite{WORS1}).
One famous example for instance is the notion of \emph{Morelli-W\l odarczyk cobordism} (cfr. \cite[Definition 2]{Wlodarczyk}, see Definition \ref{def: cobordism}), introduced to study birational maps among normal projective varieties which are realized as geometric quotients by a $\C^*$-action. This algebraic version of cobordism has been used to prove the \emph{Weak factorization conjecture} (see \cite{Wlodarczyk}). 

On the other hand, the relation between birational maps and $\C^*$-actions becomes evident in the example of the \emph{Atiyah flop} (see \cite[\S 1.3]{ReidFlip}). 
Consider the $\C^*$-action on $\C^4$ given by 
\[
t \cdot \left(x_0,x_1,y_0,y_1\right)=\left( tx_0,tx_1,t^{-1}y_0,t^{-1}y_1\right),
\]
where $t\in\C^*$ and $(x_0,x_1,y_0,y_1)\in \C^4$. The GIT quotient $\C^4\to \C^4\git \C^*$ is the affine cone over the Segre embedding of $\P^1 \times \P^1$.
The variety $\C^4 \git \C^*$  has a cone singularity at the origin, which can be resolved by blowing-up the vertex of the cone.
The exceptional divisor $\P^1\times \P^1$ of the blow-up can be contracted in two different ways, producing two smooth varieties $X_1$ and $X_2$, which are isomorphic on the complement of a closed subset of codimension greater or equal than $2$. The resulting birational map $X_1 \dashrightarrow X_2$ is called the Atiyah flop and $\C^4$ is an example of cobordism associated to it. The following picture summarizes the example:
\begin{figure}[h!]
\centering
\tikzset{every picture/.style={line width=0.75pt}} %set default line width to 0.75pt        

\begin{tikzpicture}[x=0.4pt,y=0.4pt,yscale=-1,xscale=1]
%uncomment if require: \path (0,789); %set diagram left start at 0, and has height of 789

%Shape: Polygon [id:ds4850704739937841] 
\draw  [draw opacity=0][fill={rgb, 255:red, 144; green, 19; blue, 254 }  ,fill opacity=0.2 ] (660,150) -- (640,250) -- (490,310) -- (490,310) -- (520,230) -- cycle ;
%Straight Lines [id:da41606237713260785] 
\draw    (30,250) -- (180,310) ;
%Straight Lines [id:da7936370010344177] 
\draw    (180,250) -- (30,310) ;
%Straight Lines [id:da15086398653346844] 
\draw    (200,260) -- (60,340) ;
%Straight Lines [id:da6385894053551876] 
\draw [line width=1.5]    (30,140) -- (180,200) ;
%Straight Lines [id:da25609817756390163] 
\draw [line width=1.5]    (660,150) -- (520,230) ;
%Shape: Polygon [id:ds30843765904369747] 
\draw  [draw opacity=0][fill={rgb, 255:red, 144; green, 19; blue, 254 }  ,fill opacity=0.2 ] (420,40) -- (420,150) -- (280,230) -- (280,230) -- (280,120) -- cycle ;
%Shape: Polygon [id:ds7839677942934724] 
\draw  [draw opacity=0][fill={rgb, 255:red, 74; green, 144; blue, 226 }  ,fill opacity=0.2 ] (400,90) -- (400,200) -- (250,140) -- (250,140) -- (250,30) -- cycle ;
%Shape: Polygon [id:ds5220547568697286] 
\draw  [draw opacity=0][fill={rgb, 255:red, 144; green, 19; blue, 254 }  ,fill opacity=0.2 ] (660,150) -- (660,260) -- (520,340) -- (520,340) -- (520,230) -- cycle ;
%Shape: Polygon [id:ds377636801452376] 
\draw  [draw opacity=0][fill={rgb, 255:red, 74; green, 144; blue, 226 }  ,fill opacity=0.2 ] (180,200) -- (200,280) -- (50,240) -- (50,240) -- (30,140) -- cycle ;
%Shape: Polygon [id:ds253289493622184] 
\draw  [draw opacity=0][fill={rgb, 255:red, 74; green, 144; blue, 226 }  ,fill opacity=0.2 ] (420,60) -- (420,170) -- (270,130) -- (270,130) -- (270,20) -- cycle ;
%Shape: Polygon [id:ds2831812809771401] 
\draw  [draw opacity=0][fill={rgb, 255:red, 144; green, 19; blue, 254 }  ,fill opacity=0.2 ] (400,30) -- (400,140) -- (250,200) -- (250,200) -- (250,90) -- cycle ;
%Shape: Polygon [id:ds881112108538183] 
\draw  [draw opacity=0][fill={rgb, 255:red, 74; green, 144; blue, 226 }  ,fill opacity=0.2 ] (335,325) -- (410,465) -- (260,405) -- (260,405) -- (335,325) -- cycle ;
%Shape: Polygon [id:ds6130139984748016] 
\draw  [draw opacity=0][fill={rgb, 255:red, 74; green, 144; blue, 226 }  ,fill opacity=0.2 ] (335,325) -- (430,435) -- (280,395) -- (280,395) -- (335,325) -- cycle ;
%Shape: Polygon [id:ds39194821844628935] 
\draw  [draw opacity=0][fill={rgb, 255:red, 144; green, 19; blue, 254 }  ,fill opacity=0.2 ] (335,325) -- (430,415) -- (290,495) -- (290,495) -- (335,325) -- cycle ;
%Shape: Polygon [id:ds8888399328308494] 
\draw  [draw opacity=0][fill={rgb, 255:red, 144; green, 19; blue, 254 }  ,fill opacity=0.2 ] (335,325) -- (410,405) -- (260,465) -- (260,465) -- (335,325) -- cycle ;
%Straight Lines [id:da2141826957544144] 
\draw    (170,330) -- (248.3,378.94) ;
\draw [shift={(250,380)}, rotate = 212.01] [color={rgb, 255:red, 0; green, 0; blue, 0 }  ][line width=0.75]    (10.93,-3.29) .. controls (6.95,-1.4) and (3.31,-0.3) .. (0,0) .. controls (3.31,0.3) and (6.95,1.4) .. (10.93,3.29)   ;
%Straight Lines [id:da059430083941788725] 
\draw    (431.63,378.84) -- (500,330) ;
\draw [shift={(430,380)}, rotate = 324.46] [color={rgb, 255:red, 0; green, 0; blue, 0 }  ][line width=0.75]    (10.93,-3.29) .. controls (6.95,-1.4) and (3.31,-0.3) .. (0,0) .. controls (3.31,0.3) and (6.95,1.4) .. (10.93,3.29)   ;
%Straight Lines [id:da09909378224217413] 
\draw  [dash pattern={on 4.5pt off 4.5pt}]  (448,260) -- (220,260) ;
\draw [shift={(450,260)}, rotate = 180] [color={rgb, 255:red, 0; green, 0; blue, 0 }  ][line width=0.75]    (10.93,-3.29) .. controls (6.95,-1.4) and (3.31,-0.3) .. (0,0) .. controls (3.31,0.3) and (6.95,1.4) .. (10.93,3.29)   ;
%Shape: Polygon [id:ds4175242386704846] 
\draw  [draw opacity=0][fill={rgb, 255:red, 74; green, 144; blue, 226 }  ,fill opacity=0.2 ] (650,155) -- (660,280) -- (660,280) -- (510,240) -- (510,240) -- cycle ;
%Shape: Polygon [id:ds7329252346902249] 
\draw  [draw opacity=0][fill={rgb, 255:red, 74; green, 144; blue, 226 }  ,fill opacity=0.2 ] (580,195) -- (640,310) -- (640,310) -- (490,250) -- (490,250) -- cycle ;
%Straight Lines [id:da8359005539718207] 
\draw    (151.7,168.94) -- (230,120) ;
\draw [shift={(150,170)}, rotate = 327.99] [color={rgb, 255:red, 0; green, 0; blue, 0 }  ][line width=0.75]    (10.93,-3.29) .. controls (6.95,-1.4) and (3.31,-0.3) .. (0,0) .. controls (3.31,0.3) and (6.95,1.4) .. (10.93,3.29)   ;
%Straight Lines [id:da7055480908011266] 
\draw    (440,120) -- (518.3,168.94) ;
\draw [shift={(520,170)}, rotate = 212.01] [color={rgb, 255:red, 0; green, 0; blue, 0 }  ][line width=0.75]    (10.93,-3.29) .. controls (6.95,-1.4) and (3.31,-0.3) .. (0,0) .. controls (3.31,0.3) and (6.95,1.4) .. (10.93,3.29)   ;
%Shape: Circle [id:dp16639528362810663] 
\draw  [fill={rgb, 255:red, 0; green, 0; blue, 0 }  ,fill opacity=1 ] (330,325) .. controls (330,322.24) and (332.24,320) .. (335,320) .. controls (337.76,320) and (340,322.24) .. (340,325) .. controls (340,327.76) and (337.76,330) .. (335,330) .. controls (332.24,330) and (330,327.76) .. (330,325) -- cycle ;
%Straight Lines [id:da6020295014555623] 
\draw    (240,415) -- (380,495) ;
%Straight Lines [id:da6951138713448372] 
\draw    (430,415) -- (290,495) ;
%Straight Lines [id:da8097512426943787] 
\draw    (410,405) -- (260,465) ;
%Straight Lines [id:da13247989979656716] 
\draw    (390,395) -- (240,435) ;
%Straight Lines [id:da48040891417002174] 
\draw    (260,405) -- (410,465) ;
%Straight Lines [id:da34274536279481416] 
\draw    (280,395) -- (430,435) ;
%Straight Lines [id:da4308686990205012] 
\draw    (50,240) -- (200,280) ;
%Shape: Polygon [id:ds7461534503115913] 
\draw  [draw opacity=0][fill={rgb, 255:red, 74; green, 144; blue, 226 }  ,fill opacity=0.2 ] (180,200) -- (180,310) -- (30,250) -- (30,250) -- (30,140) -- cycle ;
%Straight Lines [id:da17577793998936353] 
\draw    (10,260) -- (150,340) ;
%Shape: Polygon [id:ds6151751696139521] 
\draw  [draw opacity=0][fill={rgb, 255:red, 144; green, 19; blue, 254 }  ,fill opacity=0.2 ] (200,260) -- (200,260) -- (60,340) -- (60,340) -- (140,185) -- cycle ;
%Shape: Polygon [id:ds3190444980145123] 
\draw  [draw opacity=0][fill={rgb, 255:red, 144; green, 19; blue, 254 }  ,fill opacity=0.2 ] (180,250) -- (180,250) -- (30,310) -- (30,310) -- (90,165) -- cycle ;
%Straight Lines [id:da9454912707573251] 
\draw    (160,240) -- (10,280) ;
%Straight Lines [id:da8627817917798487] 
\draw    (620,240) -- (470,280) ;
%Straight Lines [id:da7849167182699056] 
\draw    (640,250) -- (490,310) ;
%Straight Lines [id:da6426667173315391] 
\draw    (660,260) -- (520,340) ;
%Straight Lines [id:da036192532873840944] 
\draw    (470,260) -- (610,340) ;
%Straight Lines [id:da5436269492804261] 
\draw    (490,250) -- (640,310) ;
%Straight Lines [id:da3328774675830918] 
\draw    (510,240) -- (660,280) ;
%Straight Lines [id:da15128144566750634] 
\draw [line width=1.5]    (270,20) -- (420,60) ;
%Straight Lines [id:da000043131873048385394] 
\draw [line width=1.5]    (250,30) -- (400,90) ;
%Straight Lines [id:da6139619875956873] 
\draw [line width=1.5]    (230,40) -- (370,120) ;
%Straight Lines [id:da299862555826932] 
\draw [line width=1.5]    (380,20) -- (230,60) ;
%Straight Lines [id:da44278809508998673] 
\draw [line width=1.5]    (400,30) -- (250,90) ;
%Straight Lines [id:da45668783873732766] 
\draw [line width=1.5]    (420,40) -- (280,120) ;
%Straight Lines [id:da9216283818418308] 
\draw    (230,150) -- (370,230) ;
%Straight Lines [id:da8236025108977942] 
\draw    (250,140) -- (400,200) ;
%Straight Lines [id:da3807742075199273] 
\draw    (270,130) -- (420,170) ;
%Straight Lines [id:da6293654960419728] 
\draw    (420,150) -- (280,230) ;
%Straight Lines [id:da2652516871601991] 
\draw    (400,140) -- (250,200) ;
%Straight Lines [id:da13928252133219965] 
\draw    (380,130) -- (230,170) ;

% Text Node
\draw (410,0) node [anchor=north west][inner sep=0.75pt]    {$\P^1 \times \P^1$};
% Text Node
\draw (620,130) node [anchor=north west][inner sep=0.75pt]    {$\mathbb{P}^{1}$};
% Text Node
\draw (70,120) node [anchor=north west][inner sep=0.75pt]    {$\mathbb{P}^{1}$};
% Text Node
\draw (300,500) node [anchor=north west][inner sep=0.75pt]    {$\C^4 \git \C^*$};
% Text Node
\draw (90,340) node [anchor=north west][inner sep=0.75pt]    {$X_1$};
% Text Node
\draw (550,340) node [anchor=north west][inner sep=0.75pt]    {$X_2$};
% Text Node
%\draw (315,220) node [anchor=north west][inner sep=0.75pt]    {$\Bl_0(\C^4 \git \C^*)$};

\end{tikzpicture}
\end{figure}

\noindent
One may generalize this example by considering a similar $\C^*$-action on $\C^{n+1}$ (see for instance \cite[Example 1]{Wlodarczyk}), or allowing also weights different from $\pm 1$ (cf. \cite[\S 3]{BR}): the resulting birational map is called the \emph{Atiyah flip}.
In the above construction of the Atiyah flop, the varieties $X_1$ and $X_2$ are defined using the two projective bundle structures on $\P^1 \times \P^1$: 
\begin{equation*}\label{eq:rooftopP1xP1}
\xymatrix{&\P^1 \times \P^1 \ar[ld]_{} \ar[rd]^{} & \\ \P^1 && \P^1}
\end{equation*}
Smooth projective varieties with two different projective bundle structures have been already studied in the literature (see for instance \cite{ORS}): in particular, in \cite[Lemma 4.4]{WORS1} the authors have constructed a correspondence between them and smooth projective varieties $X$ of Picard number $1$ admitting a $\C^*$-action having only two fixed point components. 

Motivated by the example of Atiyah flop and its connection with a variety admitting two projective bundle structures, we introduce the notion of \emph{rooftop flip}.
Given a smooth projective variety $\Lambda$ of Picard number $2$ admitting two projective bundle structures 
\begin{equation*}\label{eq:rooftopP1xP1}
	\xymatrix{&\Lambda \ar[ld]_{} \ar[rd]^{} & \\ \Lambda_- && \Lambda_+,}
\end{equation*}
a \emph{rooftop flip modeled by $\Lambda$} (see Definition \ref{def:Dflips}) is a small modification $\psi: W_-\dashrightarrow W_+$ among normal projective varieties which can be resolved by a common divisorial extraction 

\[
\xymatrix{ &W \ar[ld]_{b_-} \ar[rd]^{b_+} & \\ W_- \ar@{-->}[rr]_\psi && W_+}
\]
such that the restriction of $b_{\pm}: W\to W_{\pm}$ to the exceptional loci are modeled by the projective bundle maps $p_{\pm}: \Lambda\to \Lambda_{\pm}$.

In this setting, the Atiyah flop is a rooftop flip modeled by $\P^1\times \P^1$.  As we will see, the class of rooftop flips includes classical birational transformations: the Atiyah flip is a rooftop flip modeled by $\P^m\times \P^l$, and the Mukai flop (see \cite{HZ04}, \cite{WW}) is a rooftop flip modeled by $\P\left(T_{\P^2}\right)$. Moreover, rooftop flips appear as local models of the birational transformations of the geometric quotients of a smooth polarized pair $(X,L)$ under a $\C^*$-actions (see \cite[Theorem 1.1 (1)]{WORS3})
In the paper we discuss the definition of  rooftop flip and its connection with smooth projective varieties having a $\C^*$-action with only two fixed point components.
The main result of the paper is the following:
		
\begin{theorem}\label{theorem:naive}
	Given a smooth projective variety $\Lambda$ of Picard number $2$ with two projective bundle structures, there exist two quasi-projective varieties and a rooftop flip modeled by $\Lambda$ among them.
\end{theorem}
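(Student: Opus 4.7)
My plan is to pass from $\Lambda$ with its two projective bundle structures to a smooth projective variety $X$ of Picard number $1$ carrying a $\C^*$-action via \cite[Lemma 4.4]{WORS1}, to build a Morelli--W\l odarczyk cobordism from $X$, and to recover the rooftop flip as the wall-crossing of two GIT quotients of that cobordism.

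First, I would apply \cite[Lemma 4.4]{WORS1} to $(\Lambda, p_-, p_+)$ to obtain a smooth projective variety $X$ of Picard number $1$ endowed with a $\C^*$-action whose fixed locus consists of exactly two connected components $Y_- \cong \Lambda_-$ and $Y_+ \cong \Lambda_+$. Under this identification, $\Lambda$ is naturally identified with the projectivisation of the normal bundle $N_{Y_\pm/X}$, and the structural maps of the resulting projective bundles coincide with $p_\pm\colon \Lambda\to \Lambda_\pm$.

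Next, I would construct the cobordism $\cW$ from $X$ by working in an equivariant affine model of a $\C^*$-invariant open neighbourhood of $Y_-\cup Y_+$---for example, a suitable equivariant affine cone over $X$ with respect to an ample equivariant line bundle, or an affine chart provided by the Bia\l ynicki-Birula decomposition---and verify that the resulting normal quasi-projective $\C^*$-variety satisfies Definition~\ref{def: cobordism}. The two quasi-projective varieties of the statement would then be the GIT quotients $W_\pm := \cW \git_\pm \C^*$ associated to the two chambers of the $\C^*$-linearisation, and the candidate rooftop flip would be the wall-crossing birational map $\psi\colon W_-\dashrightarrow W_+$, with common resolution $W$ arising as a third GIT quotient on the wall, or equivalently as the blow-up of $W_\pm$ along the image of $Y_\pm$. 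The Bia\l ynicki-Birula decomposition of $X$ would then identify the exceptional divisor of $b_\pm\colon W\to W_\pm$ with $\P(N_{Y_\pm/X})\cong\Lambda$ and the restriction $b_\pm|_\Lambda$ with $p_\pm$, which is exactly the condition of Definition~\ref{def:Dflips}.

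The main technical obstacle will be performing this identification of the exceptional locus with $\Lambda$ and of the contractions with $p_\pm$ \emph{globally} over all of $\Lambda_\pm$, rather than merely fibrewise or étale-locally: the cobordism $\cW$ must be arranged so that its central fibre is exactly $\Lambda$ and so that the two chamber structures recover $p_\pm$ consistently. The precise form of \cite[Lemma 4.4]{WORS1} is essential here, since it produces $X$ with $\Lambda$ already embedded as a fibre of the attracting map, allowing the Bia\l ynicki-Birula decomposition to give a global identification rather than a merely pointwise one.
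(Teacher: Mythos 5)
Your proposal follows essentially the same route as the paper: pass from $\Lambda$ to the smooth drum $X$ of Picard number $1$ via \cite[Lemma 4.4]{WORS1}, take the affine cone $\hat{X}\subset V^\vee$ with the weight-$(\pm1)$ $\C^*$-action as the cobordism (realized as a restriction of the Atiyah cobordism), let the two quasi-projective varieties be the geometric quotients $\hat{X}\cap B_\pm/\C^*$, and resolve by the strict transform under the blow-up of the vertex. The one technical obstacle you flag is exactly what the paper settles in Proposition \ref{proposition:mainexceptionaldivisor}, where the global identification of the exceptional divisor with $\Lambda$ is obtained not via $\P\left(N_{Y_\pm/X}\right)$ but by showing that $\hat{X}\git\C^*$ is the affine cone over $\Lambda\simeq X/\C^*$, so that blowing up the vertex produces $\Lambda$ with the two bundle maps $p_\pm$ automatically.
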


\subsection*{Outline}

In Section \ref{sec:preliminaries} we first recall some notions regarding $\C^*$-actions on smooth polarized pairs. We then focus on the case of $\C^*$-action whose associated fixed point locus consists only of $2$ connected components; such varieties are called \emph{drums} (see Definition \ref{definition:drum}) and they are constructed upon the choice of a triple $(Y,\cL_-,\cL_+)$, where $Y$ is a smooth projective variety of Picard number $2$ admitting two projective bundle structure, and $\cL_{\pm}$ are semiample line bundles on $Y$. We conclude by recalling the relation between smooth drums and the classification of horospherical varieties done by \cite{Pas} (see Remark \ref{remark:pasquier}).

In Section \ref{sec:atiyah}, after recalling the Morelli-W\l odarczyk cobordism (see Definition \ref{def: cobordism}), we introduce and discuss the notion of rooftop flip (see Definition \ref{def:Dflips}). We then revisit the example of Atiyah flip and prove that it is indeed a rooftop flip (see \S \ref{ssec:atiyah}).

In Section \ref{sec:mainresult} we prove Theorem \ref{theorem:naive}. The idea of the proof is to view our case as a restriction of the Morelli-W\l odarczyk cobordism associated to the Atiyah flip. We conclude by deducing that the drum structure on the smooth quadric hypersurface induces a rooftop flip modeled by $\P\left(T_{\P^n}\right)$, see Corollary \ref{corollary:rooftopflipquadric}. 

\subsection*{Acknowledgements} We would like to thank Luis E. Sol\'a Conde for having suggested this problem, for all the stimulating conversations and the important suggestions. %We would like also to thank Gianluca Occhetta, Eleonora A. Romano and Jaroslaw A. Wi\'sniewski for the careful regarding of this work. 
%!TEX root = BF.tex

\section{Preliminaries}\label{sec:preliminaries}

\subsection{Notation}

We work over $\C$. In this paper $X$ will be a smooth projective variety. We denote by $\rho_X:= \dim \NU(X)$ the Picard number of $X$, where $\NU(X)$ is the finite-dimensional real vector space of Cartier divisors modulo numerical equivalence. 
Given $V$ a finite-dimensional vector space, by $\P\left(V\right)$ we mean the space of one-dimensional quotients of $V$.
The standard $\C^*$-action by homoteties is denoted by $\C^*_h$, with coordinate $h$. For sake of notation, by $t$ we will denote the coordinate of a $\C^*$-action which does not act by homoteties.  
By a \emph{contraction} we mean a proper surjective morphism with connected fibers.
A birational contraction $f$ is \emph{small} if $\codim \Exc(f)\geq 2$. 
By a \emph{flip} we mean a $D$-flip as in \cite[Definition 5.1.4]{hacon}.

\subsection{$\C^*$-actions on smooth projective varieties}\label{ssec:actions}

In this section, we collect some preliminaries about $\C^*$-actions on smooth projective varieties. We refer to \cite[Section 2]{WORS1} for a detailed discussion.

Let $X$ be a smooth projective variety endowed with a $\C^*$-action. By $X^{\C^*}$ we denote the fixed point locus of the action, and by $\cY$ we denote its set of connected components, namely:
$$X^{\C^*}=\bigsqcup_{Y \in \cY} Y.$$
By \cite[Main Theorem]{IVERSEN}, the subvarieties $Y\in \cY$ are smooth and irreducible.

For any $Y\in \cY$, we define the \emph{Bia{\l}ynicki-Birula cells} as
\[
X^{\pm}(Y):=\left\{x\in X:\lim_{t\to 0}t^{\pm 1}\cdot x\in Y\right\}.
\]
The \emph{sink} (resp. \emph{source}) of the $\C^*$-action is the unique component $Y_-\in \cY$ (resp. $Y_+\in\cY$) such that $X^-(Y_-)$ (resp. $X^+(Y_+)$) are dense subsets of $X$.
As noticed in \cite[Remark 2.5]{WORS1}, the uniqueness of the sink and the source follows from the well-known Bia\l ynicki-Birula Theorem (see \cite{BB} for the original exposition).
 
\begin{definition}
	A $\C^*$-action is \emph{equalized} at $Y\in \cY$ if for every point $p\in \left(X^-(Y)\cup X^+(Y)\right)\setminus Y$ the isotropy group of the $\C^*$-action at $p$ is trivial. Moreover, a $\C^*$-action is equalized if it is equalized at every fixed point component $Y\in \cY$. 
\end{definition}

Let $L$ be an ample line bundle on $X$.
Consider the induced $\C^*$-action on $L$; by \cite[Proposition 2.4 and the subsequent Remark]{KKLV}, a linearization of $L$ exists. 
For any $Y\in \cY$, the induced action of $\C^*$ on the fibers of $L|_{Y}$ is by multiplication with a character, which we denote by $\mu_L(Y) \in \Z$. 
Since $L$ is ample, one may show that
\[
\mu_L(Y_-)=\min_{Y\in\cY}\mu_L(Y), \qquad \mu_L(Y_+)=\max_{Y\in\cY}\mu_L(Y).
\]
By a \emph{$\C^*$-action on the smooth polarized pair $(X,L)$} we mean a non-trivial $\C^*$-action on a smooth projective variety $X$ and a linearization of the ample line bundle $L$.

\begin{remark}
	A $\C^*$-action on $(X,L)$ induces a $\C^*$-action on $(X,mL)$ for every $m\geq 0$. 
	In particular, for $m\gg 0$, the line bundle $mL$ is very ample, and we have an embedding $\phi_{|mL|}: X\hookrightarrow \P\left(\HH^0(X,mL)\right)$. 
	Following \cite[Section 2]{BWW}, a linearization of $mL$ provides a $\C^*$-action on $\HH^0(X,mL)$, and so on $\P \left(\HH^0(X,mL)\right)$.
	Moreover, the $\C^*$-action on $X$ can be seen as the restriction of the $\C^*$-action on $\P\left(\HH^0(X,mL)\right)$, that is the embedding is $\C^*$-equivariant.
\end{remark}

\begin{definition}
	Let $X$ be a smooth projective variety and let $L\in \Pic(X)$ be ample. 
The \emph{bandwidth} of the $\C^*$-action on $(X,L)$ is defined as 
\[
|\mu_L|:=\mu_L(Y_+)-\mu_L(Y_-).
\]
\end{definition}

\subsection{Smooth drums}\label{ssec:drums}

In this section we recall the characterization of smooth drums, that is smooth polarized pairs admitting a $\C^*$-action of bandwidth $1$. We refer to \cite[Section 4]{WORS1}. 

\begin{lemma}\label{lemma:gg}
	Let $Y$ be a smooth projective variety such that $\rho_Y=2$, admitting two elementary contractions $p_{\pm}:Y\to Y_{\pm}$, with $Y_{\pm}$ smooth. Let $L_{\pm}$ be very ample line bundle on $Y_{\pm}$, and denote $\cL_{\pm}:=p_{\pm}^*L_{\pm}$. Consider the projective bundle $\pi:\P(\cL_-\oplus \cL_+)\to Y$. Then $\cO_{\P(\cL_-\oplus \cL_+)}(1)$ is globally generated, and there exists a contraction, birational onto the image,
	\[
\phi=\phi_{\cO_{\P(\cL_-\oplus \cL_+)}(1)}:\P\left(\cL_-\oplus \cL_+\right)\longrightarrow \P\left(\HH^0\left(\P(\cL_-\oplus \cL_+), \cO_{\P(\cL_-\oplus \cL_+)}(1)\right)\right).
\]
\end{lemma}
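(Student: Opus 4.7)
\emph{The plan} is to establish global generation of $\cO_{\P(\cL_- \oplus \cL_+)}(1)$, compute its space of sections, and then analyze $\phi$ geometrically to deduce birationality onto the image. Since $L_\pm$ are very ample on the smooth $Y_\pm$, the pullbacks $\cL_\pm = p_\pm^* L_\pm$ are globally generated on $Y$, and hence so is the rank-two bundle $\cE := \cL_- \oplus \cL_+$. Composing the pullback to $\P(\cE)$ of any surjection $\cO_Y^{\oplus N} \twoheadrightarrow \cE$ with the tautological quotient $\pi^*\cE \twoheadrightarrow \cO_{\P(\cE)}(1)$ yields a surjection $\cO_{\P(\cE)}^{\oplus N} \twoheadrightarrow \cO(1)$, so $\cO(1)$ is globally generated and $\phi = \phi_{|\cO(1)|}$ is a well-defined morphism into $\P(\HH^0(\cO(1)))$. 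Using $\pi_*\cO(1) = \cE$ together with $p_{\pm, *}\cO_Y = \cO_{Y_\pm}$ (contractions have connected fibers), we identify $\HH^0(\cO(1)) = \HH^0(L_-) \oplus \HH^0(L_+)$.

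\emph{Geometric analysis.} The splitting $\cE = \cL_- \oplus \cL_+$ provides two tautological line-bundle quotients $\cE \twoheadrightarrow \cL_\pm$, and hence two sections $\sigma_\pm: Y \to \P(\cE)$ of $\pi$; on their images $S_\pm$, the sections of $\cO(1)$ coming from $\HH^0(L_\mp)$ vanish, so $\phi|_{S_\pm}$ factors as $Y \xrightarrow{p_\pm} Y_\pm \xrightarrow{\iota_\pm} \P(\HH^0(L_\pm))$, where $\iota_\pm$ is the embedding by $|L_\pm|$ and the subspaces $\P(\HH^0(L_\pm)) \subset \P(\HH^0(\cE))$ are linearly disjoint. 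Each fiber $\pi^{-1}(y) \cong \P^1$ is mapped by $\phi$ linearly onto the unique line joining $\iota_-(p_-(y))$ and $\iota_+(p_+(y))$. A general point $P$ of the image therefore lies on a unique such line, whose endpoints $\iota_\pm(y_\pm)$ are recovered by projecting onto the two disjoint subspaces, and $\phi^{-1}(P)$ bijects with the fiber of $p_- \times p_+: Y \to Y_- \times Y_+$ over $(y_-, y_+)$.

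\emph{Reduction and obstacle.} This reduces the lemma to showing that $p_- \times p_+$ is birational onto its image. Finiteness is immediate from $\rho_Y = 2$: a curve contracted by both $p_\pm$ has numerical class in the two distinct extremal rays of $\cNE{Y}$, so it is numerically trivial, a contradiction. The main obstacle is upgrading finiteness to generic degree one; here the rigidity of the rooftop setting, namely the two projective bundle structures on $Y$, forces $p_- \times p_+$ to in fact be a closed embedding. Granted this, Stein factorization applied to $\phi$, together with the normality of $\P(\cE)$, yields the asserted contraction birational onto its image.
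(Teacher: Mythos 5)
Your overall strategy coincides with the paper's: global generation of $\cO_{\P(\cL_-\oplus\cL_+)}(1)$ from that of $\cL_\pm$, the identification $\HH^0\left(\cO_{\P(\cL_-\oplus\cL_+)}(1)\right)=\HH^0(Y_-,L_-)\oplus\HH^0(Y_+,L_+)$ via the projection formula, and the geometric analysis of the two sections $\sigma_\pm$ (sent by $\phi$ onto $Y_\pm$ through $p_\pm$) and of the fibres of $\pi$ (sent onto the lines joining the two linearly disjoint subspaces). Your reduction of birationality to the generic injectivity of $p_-\times p_+\colon Y\to Y_-\times Y_+$ is correct, and is in fact a sharper account of the crux than the paper gives: the paper simply asserts that $\phi$ restricted to the complement of the two sections is an isomorphism onto its image, and that assertion is precisely equivalent to injectivity of $p_-\times p_+$ (very ampleness of $L_\pm$ recovers $p_\pm(y)$ from the kernels of the restricted functionals, and the only remaining datum is the ratio along the fibre of $\pi$).

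The gap is in how you discharge this crux. You claim that ``the rigidity of the rooftop setting, namely the two projective bundle structures on $Y$,'' forces $p_-\times p_+$ to be a closed embedding, and then proceed ``granted this.'' But the lemma does not assume projective bundle structures: its hypotheses are only that $p_\pm$ are elementary contractions with smooth targets; the bundle structures and the condition $\deg(\cL_\mp|_{F_\pm})=1$ enter only later, in the characterization of smooth drums (Theorem \ref{theorem:smoothdrum}). So the rigidity you invoke is not available here, and even when it is, the claim that $p_-\times p_+$ is a closed embedding is asserted rather than proved: a priori $p_-$ restricted to a fibre of $p_+$ is only finite, and promoting that to injectivity genuinely needs an argument (e.g.\ via the degree-one condition). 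As you correctly note, $\rho_Y=2$ gives only finiteness of $p_-\times p_+$, not generic degree one, and the latter does not follow formally from the stated hypotheses. In fairness, the paper's own proof is no more explicit at exactly this point; but since you isolated the statement that needs proving, your argument remains incomplete until you either prove generic injectivity of $p_-\times p_+$ in the setting at hand or flag it as an additional hypothesis.
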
  

\begin{proof}
	The global generation of $\cO_{\P(\cL_-\oplus \cL_+)}(1)$ is immediate. Let us just notice that, by the projection formula, we have an isomorphism
	\begin{equation}\label{equation:drumembedding}
		\HH^0\left(\P(\cL_-\oplus \cL_+),\cO_{\P(\cL_-\oplus \cL_+)}(1)\right)= \HH^0(Y_-,L_-)\oplus\HH^0(Y_+,L_+).
	\end{equation}
	Let us prove that the morphism 
	\[
	\phi: \P(\cL_-\oplus \cL_+) \to \P(\HH^0(Y_-,L_-)\oplus \HH^0(Y_+,L_+)),
	\]
	associated to evaluation of sections is a contraction, birational onto the image.
	 Consider the sections $\sigma_{\pm}: Y\to \P\left(\cL_-\oplus \cL_+\right)$ associated to the quotients $\cL_-\oplus \cL_+\to \cL_{\pm}$. The compositions $\phi\circ \sigma_{\pm}$ coincide with the bundle maps $p_{\pm}$, in particular they have connected fibers. 
	 On the other hand the restriction of $\phi$ to $\P\left(\cL_-\oplus \cL_+\right)\setminus \left(\sigma_-(Y_-)\cup \sigma_+(Y)\right)$ is an isomorphism onto the image.	
\end{proof}

\begin{definition}\label{definition:drum}
	The image $X:=\phi \left(\P(\cL_-\oplus \cL_+)\right)$ is called the \emph{drum constructed upon the triple $(Y,\cL_-,\cL_+)$}.
\end{definition}

\begin{remark}\label{remark:drumembedding}
	Using Equation \ref{equation:drumembedding}, we obtain that there is an embedding 
\[
X \subset \P\left(\HH^0(Y_-,L_-)\oplus \HH^0(Y_-,L_+)\right).
\]
\end{remark}

\begin{remark}\label{remark:drumdimension}
	By construction we have that $\dim X=\dim Y +1.$
\end{remark}

\begin{theorem}\label{theorem:smoothdrum}\cite[Lemma 4.4]{WORS1}
	A drum $X$ is smooth if and only if the following conditions are satisfied:
	\begin{itemize}
		\item $\Nef(Y)=\langle \cL_-,\cL_+ \rangle$;
		\item $p_\pm: Y\to  Y_\pm$ has a projective bundle structure;
		\item $\deg(\cL_{\mp}|_{F_{\pm}})=1$,  where $F_\pm$ denotes a fiber of $p_\pm$.
	\end{itemize}
\end{theorem}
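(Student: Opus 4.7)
My plan is to reduce the smoothness of $X$ to a local analysis of the contraction $\phi\colon\P(\cL_-\oplus\cL_+)\to X$ of Lemma~\ref{lemma:gg} in a neighborhood of each of its positive-dimensional exceptional loci, and then apply a standard contractibility criterion of Nakano--Fujiki type for blowdowns of projective bundles.

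First I would isolate the problem: since $\phi$ restricts to an isomorphism on $\P(\cL_-\oplus\cL_+)\setminus\bigl(\sigma_-(Y)\cup\sigma_+(Y)\bigr)$, the drum $X$ is automatically smooth on $X\setminus(Y_-\cup Y_+)$, so the question reduces to analyzing $\phi$ in a neighborhood of each $\sigma_\pm(Y)$. Since $\sigma_\pm$ is the section of $\pi\colon\P(\cL_-\oplus\cL_+)\to Y$ corresponding to the quotient $\cL_-\oplus\cL_+\twoheadrightarrow\cL_\pm$ with kernel $\cL_\mp$, the relative Euler sequence gives
\[
N_{\sigma_\pm(Y)/\P(\cL_-\oplus\cL_+)}\;\cong\;\cL_\pm\otimes\cL_\mp^{-1}.
\]
Restricting to a fiber $F_\pm$ of $p_\pm$, on which $\cL_\pm=p_\pm^*L_\pm$ is trivial, this further reduces to $\cL_\mp^{-1}|_{F_\pm}$.

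For the implication (1)--(3)$\,\Rightarrow\,$smoothness, if $p_\pm$ is a projective bundle with fiber $F_\pm\cong\P^{r_\pm}$ and $\deg(\cL_\mp|_{F_\pm})=1$, the previous computation gives $N_{\sigma_\pm(Y)/\P(\cL_-\oplus\cL_+)}|_{F_\pm}\cong\cO_{\P^{r_\pm}}(-1)$, the tautological line bundle. The Nakano--Fujiki criterion then realizes $\phi$, in a neighborhood of each $\sigma_\pm(Y)$, as the blowup of a smooth variety along a smooth subvariety; together with the open isomorphism on the complement this yields smoothness of $X$. Conversely, assuming $X$ smooth, the components $Y_\pm\subset X^{\C^*}$ are smooth by \cite[Main Theorem]{IVERSEN}, and $\phi$ is a proper birational morphism between smooth varieties whose exceptional divisor in a neighborhood of $Y_\pm$ is the single prime divisor $\sigma_\pm(Y)$. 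Applying the criterion in reverse identifies this local contraction with the blowup of $X$ along $Y_\pm$: hence $\sigma_\pm(Y)\cong\P(N^*_{Y_\pm/X})\to Y_\pm$ is a projective bundle whose projection is $p_\pm$ (condition (2)), and the fibrewise identification $\cO(-1)\cong\cL_\mp^{-1}|_{F_\pm}$ yields condition (3). Finally, once $p_\pm$ is known to be a projective bundle onto a smooth base, we have $\rho_{Y_\pm}=\rho_Y-1=1$, so the ample class $L_\pm$ generates $\Nef(Y_\pm)$ and its pullback $\cL_\pm$ spans one of the two extremal rays of $\Nef(Y)$, giving (1). The main obstacle is the careful verification of the Nakano--Fujiki hypotheses in both directions and, in particular, the fact that the normal bundle restriction equals exactly $\cO(-1)$ rather than $\cO(-d)$ for some $d>1$, which is precisely the role of condition (3).
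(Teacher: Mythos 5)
This statement is quoted from \cite[Lemma 4.4]{WORS1} and the paper gives no proof of its own, so there is nothing internal to compare against; your proposal has to stand on its own. The overall strategy --- reduce to the local structure of $\phi$ along the two sections, compute $N_{\sigma_\pm(Y)/\P(\cL_-\oplus\cL_+)}\cong\cL_\pm\otimes\cL_\mp^{-1}$ from the Euler sequence, restrict to a fiber $F_\pm$ where $\cL_\pm$ trivializes, and reduce everything to whether the resulting normal bundle is $\cO(-1)$ on projective-space fibers --- is the right idea and is essentially the local analysis behind the cited result. The normal bundle computation is correct under the paper's quotient convention for $\P(-)$.

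There are, however, two places where the argument as written is not yet a proof. In the forward direction, the Fujiki--Nakano criterion produces a smooth (a priori analytic) blowdown $M'$ of $\sigma_\pm(Y)$; you still have to identify $M'$ with the drum $X$, which is \emph{defined} as the image of $\phi$. The induced map $M'\to X$ is proper, birational and bijective, but that alone does not make it an isomorphism (the normalization of a cuspidal curve has all these properties), so you need normality of the image, e.g.\ via the ring $\bigoplus_m \HH^0\left(\P(\cL_-\oplus\cL_+),\cO_{\P(\cL_-\oplus\cL_+)}(m)\right)$. More seriously, the converse is not a ``reverse'' application of the criterion: Fujiki--Nakano tells you when a divisor \emph{can} be contracted smoothly, not that every birational contraction of a prime divisor onto a smooth center between smooth varieties \emph{is} a blowup. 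To identify $\phi$ near $Y_\pm$ with $\Bl_{Y_\pm}(X)$ you need an extra input --- for instance the universal property of the blowup (which requires the pullback of the ideal sheaf of $Y_\pm$ to be the invertible sheaf $\cO(-\sigma_\pm(Y))$) together with a reason why the induced map to $\Bl_{Y_\pm}(X)$ contracts nothing; and you tacitly assume $p_\pm$ is equidimensional, whereas a priori it is only an elementary contraction. A cleaner route for this direction, and the one suggested by the $\C^*$-picture, is the Bia{\l}ynicki--Birula decomposition of the smooth $X$: the cell over $Y_\pm$ is a locally trivial affine-space fibration, and its $\C^*$-quotient identifies $Y\cong\sigma_\pm(Y)$ with the projectivization of $N_{Y_\pm/X}$, from which conditions (2) and (3) follow. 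Your derivation of condition (1) is fine; as an equality of cones it is automatic once $p_\pm$ are the two elementary contractions of $Y$.
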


\begin{remark}
Since $\rho_{Y_\pm}=1$, the third condition of the above theorem implies that $L_\pm$ must be the respective generators of the Picard groups of $Y_\pm$.
\end{remark}

Since we will work only in the context of smooth drums, we fix the notation for the rest of the section in the following:

\begin{setup}\label{setup:smoothdrum}
	Let $Y$ be a smooth projective variety such that $\rho_Y=2$, admitting two projective bundle structure $p_{\pm}:Y\to Y_{\pm}$, with $Y_{\pm}$ smooth. Let $L_{\pm}$ be very ample line bundles on $Y_{\pm}$, and denote $\cL_{\pm}:=p_{\pm}^*L_{\pm}$. Suppose that $\deg(\cL_{\mp}|_{F_{\pm}})=1$,  where $F_\pm$ denotes a fiber of $p_\pm$. Consider the projective bundle $\pi:\P(\cL_-\oplus \cL_+)\to Y$ and the morphism $\phi$ given by $\cO_{\P(\cL_-\oplus \cL_+)}(1)$. Then let $X$ be the smooth drum constructed upon $(Y,\cL_-,\cL_+)$.
\end{setup}

\begin{remark}
	Notice that $X$ comes with a natural ample line bundle $L$, which is the restriction of the hyperplane class in $\P(\cL_-\oplus\cL_+)$, such that $\phi^*L=\cO_{\P(\cL_-\oplus\cL_+)}(1)$.
\end{remark}

For sake of clarity, Set-up \ref{setup:smoothdrum} can be summarized by means of the following diagram:
\[
\xymatrix{ & \P\left(\cL_-\oplus \cL_+\right) \ar[r]^-\phi \ar[d]_\pi & X \\  L_- \ar[d]& Y \ar[ld]_{p_-}  \ar[rd]^{p_+} &L_+ \ar[d] \\ Y_- && Y_+ }
\]

\begin{lemma}
Let $\pi: \P\left(\cL_-\oplus \cL_+\right)\to Y$ be a projective bundle as in Set-up \ref{setup:smoothdrum}. Then $\P\left(\cL_-\oplus \cL_+\right)$ admits a $\C^*$-action of bandwidth $1$, whose sink and source are both isomorphic to $Y$.
\end{lemma}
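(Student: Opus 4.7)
The plan is to construct the $\C^*$-action fiberwise from the direct sum decomposition and then read off the sink, source and bandwidth from standard projective bundle calculations.

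First I would fix a $\C^*$-action on the vector bundle $\cL_-\oplus \cL_+$ by declaring that $t\in \C^*$ acts with weight $0$ on the summand $\cL_-$ and with weight $1$ on the summand $\cL_+$; equivalently, $t\cdot (v_-,v_+)=(v_-,tv_+)$. This is a well-defined lift to the total space of $\cL_-\oplus \cL_+$ commuting with the fiberwise $\C^*_h$-homotheties, so it descends to a (non-trivial) $\C^*$-action on $\P(\cL_-\oplus\cL_+)$ over the trivial action on $Y$, and simultaneously provides a linearization of the tautological quotient line bundle $\cO_{\P(\cL_-\oplus \cL_+)}(1)$.

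Next I would identify the fixed locus. Since $\pi$ is $\C^*$-equivariant and the action on the base is trivial, the fixed locus is contained in the union of the loci of quotients of $\cL_-\oplus \cL_+$ stable under the action, and these are precisely the two line-subbundles given by the eigenspace decomposition. Hence the fixed locus consists of the two sections $\sigma_-,\sigma_+:Y\to \P(\cL_-\oplus \cL_+)$ corresponding to the quotients $\cL_-\oplus\cL_+\twoheadrightarrow \cL_\pm$; each is isomorphic to $Y$ via $\pi$. A general point of $\P(\cL_-\oplus \cL_+)$ is a quotient of the form $[v_-:v_+]$ with both components nonzero, and $t\cdot [v_-:v_+]=[v_-:tv_+]$. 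Taking $t\to 0$ gives the limit $[v_-:0]\in \sigma_-(Y)$, and taking $t\to \infty$ gives $[0:v_+]\in \sigma_+(Y)$, so $\sigma_-(Y)$ is the sink and $\sigma_+(Y)$ the source.

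Finally I would compute the bandwidth with respect to $L=\cO_{\P(\cL_-\oplus \cL_+)}(1)$. On $\sigma_-(Y)$ the fiber of $L$ is the quotient onto $\cL_-$, on which $\C^*$ acts with weight $0$, so $\mu_L(\sigma_-(Y))=0$. On $\sigma_+(Y)$ the fiber of $L$ is the quotient onto $\cL_+$, on which $\C^*$ acts with weight $1$, so $\mu_L(\sigma_+(Y))=1$. Therefore $|\mu_L|=1$, proving that the $\C^*$-action has bandwidth $1$ with sink and source both isomorphic to $Y$. No step is really an obstacle here; the only point requiring a little care is the convention for $\P(-)$ as parameterizing one-dimensional quotients, which is already fixed in the notation of the paper and matches the identification of $\sigma_\pm$ with the quotients onto $\cL_\pm$.
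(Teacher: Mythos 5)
Your proof is correct and follows essentially the same route as the paper's: both define the $\C^*$-action fiberwise on the $\P^1$-bundle via the splitting $\cL_-\oplus\cL_+$, with the two sections $\sigma_\pm(Y)$ as the fixed components, and you merely supply more detail (the explicit linearization and the weight computation giving bandwidth $1$). The only quibble is that under the paper's convention the sink collects the limits as $t\to\infty$, so your labelling of sink versus source is swapped relative to the paper's; this is immaterial for the statement, since both components are isomorphic to $Y$.
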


\begin{proof}
Consider the \emph{standard $\C^*$-action} on $\left(\P^1, \cO_{\P^1}(1)\right)$: 
\[
\C^* \times \P^1 \to \P^1, \quad \left(t, \left[x_0:x_1\right]\right) \longmapsto \left[tx_0:x_1\right] .
\]
This action has bandwidth $1$. Let $\sigma_\pm:Y \to \P\left(\cL_-\oplus\cL_+\right)$ be the sections corresponding to the quotients $\cL_-\oplus\cL_+ \to \cL_\pm$.
Then the $\P^1$-bundle structure on $\P\left(\cL_-\oplus\cL_+\right)$ allows us to define a $\C^*$-action given fiberwise by the standard $\C^*$-action on $\P^1$ and such that the sink is $\sigma_-(Y)$ and the source is $\sigma_+(Y)$.
\end{proof}

\begin{remark}\label{remark:sections}\cite[Remark 4.2]{WORS1}
Let $X$ be a smooth drum constructed upon $\left(Y,\cL_-,\cL_+\right)$ as in Set-up \ref{setup:smoothdrum}. We recall that, by Remark \ref{remark:drumembedding}, $\phi$ provides an embedding $X \subset \P\left(\HH^0(Y_-,L_-)\oplus \HH^0(Y_-,L_+)\right)$.
As we note in the previous proof, $\sigma_\pm(Y)$ contain the limit points for $t^{\pm 1} \to 0$. Thus we have that:
\begin{itemize}
\item $(\phi \circ \sigma_\pm)(Y) \simeq Y_\pm$, embedded in $\P\left(\HH^0(Y_\pm,L_\pm)\right)$ via $\cO_{\P(\cL_-\oplus \cL_+)}(1)$;
\item For all $y \in Y$, $\pi^{-1}(y)$ is mapped by $\phi$ into a $\C^*$-orbit.
\end{itemize}
Then $X$ admits a $\C^*$-action of bandwidth $1$, whose sink and source are $Y_-$ and $Y_+$, respectively.
\end{remark}

\begin{example}\label{example:drumPn}
Consider $Y_-=\P^m, Y_+=\P^l$, let $Y=\P^m \times \P^l$, and let $p_\pm$ be the projections onto the two factors of $\P^m \times \P^l$. Denote by $L_-=\cO_{\P^m}(1)$ and $L_+=\cO_{\P^l}(1)$ the very ample line bundles on $\P^m$ and $\P^l$, respectively.
Then $\cL_-=\cO_{\P^m \times \P^l}(1,0)$, $\cL_+=\cO_{\P^m \times \P^l}(0,1)$, and we have a diagram of the form
\[
\xymatrix{ &\P\left(\cO_{\P^m\times\P^l}(1,0) \oplus \cO_{\P^m\times\P^l}(0,1)\right) \ar[r]^-\phi \ar[d]_\pi & X \\  \cO_{\P^m}(1) \ar[d]& \P^m\times\P^l \ar[ld]_{p_-}  \ar[rd]^{p_+} &\cO_{\P^l}(1) \ar[d] \\ \P^m && \P^l }
\]
As shown in Remark \ref{remark:drumembedding}, $X \subset \P\left( \HH^0(\P^m,\cO_{\P^m}(1))\oplus \HH^0(\P^l,\cO_{\P^l}(1))\right) \simeq \P^{m+l+1}$.
By Remark \ref{remark:drumdimension}, $\dim X= \dim \P^m\times \P^l +1$, hence $\phi$ is surjective and $X \simeq \P^{m+l+1}$. 
In particular, $\P^{m+l+1}$ is the drum constructed upon 
\[
\left(\P^m\times \P^l, \cO_{\P^m\times\P^l}(1,0),\cO_{\P^m\times \P^l}(0,1)\right).
\]
On the other hand, consider the $\C^*$-action on $\P^{m+l+1} $ given by
	\begin{equation*}\label{eq:actionP}
	t \cdot \left[x_0:\ldots:x_{m+l+1}\right]=\left[tx_0:\ldots:t x_{l}:x_{l+1}:\ldots:x_{m+l+1}\right].
	\end{equation*}
	The sink and the source are, respectively, 
\[
\P^m=\left\{x_{l+1}=\ldots=x_{m+l+1}=0\right\},\qquad \P^l=\left\{x_0=\ldots=x_l=0\right\}.
\]
Then the $\C^*$-action of bandwidth $1$ on $X$ induced by the drum structure is precisely the action defined above.
\end{example}

\begin{theorem}\label{thm:bw1}\cite[Theorem 4.8]{WORS1}
Let $X$ be a smooth projective variety with $\rho_X=1$ different from the projective space and let $L$ be an ample line bundle on $X$. Then $(X,L)$ admits a $\C^*$-action of bandwidth $1$ if and only if $X$ is a smooth drum.
\end{theorem}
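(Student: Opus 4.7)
The plan is to handle the two implications separately, the forward one being essentially contained in Remark \ref{remark:sections}. For ``smooth drum $\Rightarrow$ bandwidth-$1$ action'': the fiberwise standard $\C^*$-action on $\pi:\P(\cL_-\oplus\cL_+)\to Y$ has bandwidth $1$, and the contraction $\phi$ of Lemma \ref{lemma:gg} is equivariant with $\phi^*L=\cO(1)$ and $\phi\circ\sigma_\pm=p_\pm$, so the induced action on $(X,L)$ has sink $Y_-$, source $Y_+$, and bandwidth still equal to $1$.

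For the converse, assume $(X,L)$ admits a bandwidth-$1$ $\C^*$-action. Since $\mu_L$ is integer-valued and $\mu_L(Y_+)-\mu_L(Y_-)=1$, one has $X^{\C^*}=Y_-\sqcup Y_+$, and a standard argument combining ampleness of $L$ with the bandwidth hypothesis shows that the action is equalized at both components. Hence for any $x\in X\setminus X^{\C^*}$, the orbit closure $C_x:=\overline{\C^*\cdot x}$ is a smooth rational curve meeting $Y_\pm$ transversally at a single point, with $L\cdot C_x=1$. I would define $Y$ as the normalization of the irreducible component of $\rat^n(X)$ parametrizing these minimal curves, endowed with the natural evaluation morphisms $p_\pm:Y\to Y_\pm$ given by $[C]\mapsto C\cap Y_\pm$. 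Because the action is equalized at $Y_\pm$ and bandwidth-$1$ forces all normal weights there to equal $\pm 1$, the fiber of $p_\pm$ over $y_\pm$ is canonically identified with $\P(N_{Y_\pm/X,y_\pm})$ via the tangent direction of the orbit closure at $y_\pm$; globalizing gives $Y\simeq\P(N_{Y_-/X})\simeq\P(N_{Y_+/X})$ as projective bundles. Since $\rho_X=1$ implies $\rho_{Y_\pm}=1$ (via the Bia{\l}ynicki-Birula decomposition, whose cells are affine bundles over $Y_\pm$ and dense in $X$, forcing $\rho_{Y_\pm}\leq\rho_X$, together with $X\neq\P^n$ ruling out the case of isolated sink/source), one gets $\rho_Y=2$ and $\Nef(Y)=\langle\cL_-,\cL_+\rangle$ for $\cL_\pm:=p_\pm^*L_\pm$, while the condition $\deg(\cL_\mp|_{F_\pm})=1$ translates directly into $L\cdot C_x=1$. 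The hypotheses of Theorem \ref{theorem:smoothdrum} being verified, $(Y,\cL_-,\cL_+)$ produces a smooth drum $X'$, and the universal family over $Y$ provides an equivariant isomorphism $X'\simeq X$ matching the drum contraction with the evaluation of orbit closures.

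The main obstacle is showing that $Y$ is smooth and projective and that the morphisms $p_\pm$ are globally projective bundles. This requires controlling the deformation theory of orbit closures and ruling out degenerations of the family, and it is precisely here that the hypothesis $X\neq\P^n$ becomes essential: on projective space there exist minimal rational curves not arising as $\C^*$-orbit closures of the given action, which would obstruct the identification of $Y$ with a single irreducible component of $\rat^n(X)$ and invalidate the projective bundle description. A secondary technical point is the computation of $\rho_{Y_\pm}$, which relies on the affine bundle structure of the Bia{\l}ynicki-Birula cells plus the exclusion of the degenerate case where $Y_\pm$ are points.
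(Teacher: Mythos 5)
First, a remark on the comparison itself: the paper does not prove this statement — it is imported as \cite[Theorem 4.8]{WORS1} and used as a black box — so there is no internal proof to measure your attempt against; I am judging it against the argument of the cited reference. Your forward implication is fine and is essentially Remark \ref{remark:sections}. For the converse, your opening moves are the right ones: bandwidth $1$ together with ampleness of $L$ forces $X^{\C^*}=Y_-\sqcup Y_+$, forces the action to be equalized, and forces $L\cdot C_x=1$ on every one-dimensional orbit closure. But from there you take a detour through $\rat^n(X)$ that manufactures exactly the obstacles you then concede you cannot overcome (smoothness and projectivity of $Y$, globality of the two bundle structures, degenerations of the family). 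The standard route avoids deformation theory entirely: since the action is equalized with only two fixed components, the Bia{\l}ynicki-Birula decomposition identifies $X\setminus Y_{\mp}$ equivariantly with the total space of $N_{Y_\pm/X}$ (a bundle of pure weight $\pm1$), so the geometric quotient $Y:=\left(X\setminus(Y_-\cup Y_+)\right)/\C^*$ is \emph{directly} isomorphic to $\P\left(N_{Y_-/X}\right)$ and to $\P\left(N_{Y_+/X}\right)$. Smoothness, projectivity, $\rho_Y=2$, the two projective bundle structures, and $\Nef(Y)=\langle\cL_-,\cL_+\rangle$ all come for free from this double description; no moduli of rational curves are needed.

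Beyond this strategic inefficiency there are two genuine gaps. First, even granting your construction of $Y$, you do not actually prove that $X$ is a drum: verifying the three conditions of Theorem \ref{theorem:smoothdrum} shows that the triple $(Y,\cL_-,\cL_+)$ \emph{defines} a smooth drum $X'$, but identifying $X'$ with $X$ requires exhibiting $\P(\cL_-\oplus\cL_+)$ as the closure of the family of orbit closures and checking that the contraction $\phi$ of Lemma \ref{lemma:gg} collapses its two sections onto $Y_\pm\subset X$; the sentence ``the universal family provides an equivariant isomorphism'' is the claim, not the proof. Second, your closing explanation of the role of the hypothesis $X\neq\P^n$ is incorrect: it has nothing to do with $\P^n$ carrying minimal rational curves that are not orbit closures (the curves you want are singled out by the $\C^*$-action, not by minimality in $\rat^n(X)$, so extraneous rational curves are harmless). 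The hypothesis enters exactly where you mention it only in passing: it rules out bandwidth-$1$ actions whose sink or source is a single point, for which the quotient $Y\simeq\P(N_{Y_\pm/X})$ is a projective space of Picard number $1$ and no drum datum $(Y,\cL_-,\cL_+)$ with $\rho_Y=2$ exists; one shows that a pointlike extremal fixed component forces $X\simeq\P^n$ (note that $\P^n$ itself \emph{is} a smooth drum by Example \ref{example:drumPn}, so the exclusion concerns the degenerate actions, not the validity of the biconditional). As written, the proposal leaves the substantive half of the theorem unproved.
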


\begin{example}\label{example:quadric}
	Consider the $\C^*$-action on $\P^{2n+1}$ defined as follows:
	\[
	t \cdot \left[x_0:\ldots:x_{2n+1}\right]=\left[tx_0:\ldots:tx_n: x_{n+1}:\ldots:x_{2n+1}\right].
	\]
	Let $Q^{2n} \subset \P^{2n+1}$ be the smooth quadric hypersurface defined by the equation $x_0x_{n+1}+x_1x_{n+2}+\ldots+x_nx_{2n+1}=0$. By construction $Q^{2n}$ is $\C^*$-invariant and its fixed point locus consists only of the sink and the source, which are respectively:
	\[
	Y_-=\left\{x_{n+1}=\ldots=x_{2n+1}=0\right\} \simeq \P^n, \qquad Y_+=\left\{x_0=\ldots=x_n=0\right\} \simeq (\P^n)^\vee,
	\]
	where the duality between $Y_-$ and $Y_+$ is provided by the non-degenerate quadratic form defining $Q^{2n}$.
	Since the $\C^*$-action on $Q^{2n}$ has bandwidth $1$, using Theorem \ref{theorem:smoothdrum} we argue that $Q^{2n}$ is a smooth drum. By \cite[Proposition 1.9]{Pas}, the smooth projective variety of Picard number $2$ inducing this drum is 
	\begin{equation*}
		\xymatrix{&\P\left(T_{\P^n}\right) \ar[ld]_{p_-} \ar[rd]^{p_+} & \\
			\P^n && (\P^n)^\vee}
	\end{equation*}
	where the two projective bundle structures are induced by the well known description $\P\left(T_{\P^n}\right)=\left\{(p,H)\in \P^n\times (\P^n)^\vee\mid p\in H\right\}$. 
	Let us briefly explain this fact; let $X$ be the smooth drum associated to $\P\left(T_{\P^n}\right)$. By Remark \ref{remark:drumembedding}, $X \subset \P^{2n+1}$ and, by Remark \ref{remark:sections}, the sink and the source of the $\C^*$-action on $X$ are respectively $\P^n$, $(\P^n)^\vee$. Write $\P^n=\P(V)$, $(\P^n)^\vee=\P(V^\vee)$, with $V$ a complex vector space of dimension $n+1$. By construction the drum $X$ of $\P(T_{\P^n})$ is an hypersurface in $\P(V\oplus V^\vee)$. A point in $\P(V\oplus V^\vee)$ belongs to $X$ if it is the class of a vector $p+ h\in V\oplus V^\vee$ such that $h(p)=0$. Algebraically this says that $X$ is given by a nondegenerate quadratic equation, hence we conclude.
	%Given two points $p\in \P^n$ and $q \in (\P^n)^\vee$, the line joining $p$ and $q$ in $\P^{2n+1}$ has to be orthogonal to both $\P^n$ and $ (\P^n)^\vee$, hence we obtain a quadratic equation which is precisely the defining equation of $Q^{2n}$.		
	
	%Let us briefly explain why $Q^{2n}$ has a drum structure associated to $\P\left(T_{\P^n}\right)$: let $X$ be the smooth drum associated to $\P\left(T_{\P^n}\right)$.
	%By Remark \ref{remark:drumembedding}, $X \subset \P^{2n+1}$ and, by Remark \ref{remark:sections}, the sink and the source of the $\C^*$-action on $X$ are respectively $\P^n$, $(\P^n)^\vee$.
	 %Given two points $p\in \P^n$ and $q \in (\P^n)^\vee$, the line joining $p$ and $q$ in $\P^{2n+1}$ has to be orthogonal to both $\P^n$ and $ (\P^n)^\vee$, hence we obtain a quadratic equation which is precisely the defining equation of $Q^{2n}$.		
\end{example}

\begin{remark}\label{remark:pasquier}
To our understanding, the only known examples of smooth drums are constructed upon a smooth projective variety $Y$ admitting two projective bundle structures such that $Y_\pm$ are rational homogeneous varieties, i.e. quotient of semisimple linear algebraic groups by parabolic subgroups.
Except for a non-homogeneous sporadic example appearing in \cite{Ott2} (see also \cite[\S 2]{Kan}), in all the known examples $Y$ is also a rational homogeneous variety.
In \cite[Remark 3.3]{ORS} there is a complete classification of rational homogeneous varieties with Picard number $2$ admitting two projective bundle structures.
The corresponding smooth drums $X$ constructed from a rational homogeneous variety $Y$ are well-known in literature: they are one of the horospherical varieties classified by Pasquier, see  \cite[Theorem 0.1]{Pas}.
%Notice that some horospherical varieties are homogeneous; in particular $X$ can be:
%\begin{itemize}
%	\item a smooth quadric hypersurface of even dimension;
%	\item a Grassmannian (including the case of $X$ being the projective space);
%	\item a spinor variety.
%\end{itemize}
%By Theorem \ref{thm:bw1}, they correspond to the unique rational homogeneous varieties admitting a $\C^*$-action of bandwidth $1$, cf. \cite[Theorem 1.1]{Fra1}. 
\end{remark}

%!TEX root = BF.tex

\section{Morelli-W\l odarczyk cobordism and rooftop flips}\label{sec:atiyah}

In this section we first recall the definition of \emph{Morelli-W\l odarczyk cobordism} and we introduce the notion of \emph{rooftop flip}. We will then concentrate on studying the Atiyah flip; the reason is two-fold: on one hand this birational transformation, as we will see, can be constructed in terms of the Morelli-W\l odarczyk cobordism; on the other hand, it is the motivating example of the notion of rooftop flip.

\begin{definition}\label{def: cobordism}
	Let $X_1,X_2$ be birationally equivalent normal varieties. The \emph{Morelli--W\l odarczyk cobordism} between $X_1$ and $X_2$ is a normal variety $B$, endowed with a $\C^*$-action such that 
	\begin{equation*}
		\begin{split}
			B_+&=\{p\in B\mid \lim_{t\to 0} tp \text{ does not exists}\},\\
			B_-&=\{p\in B\mid \lim_{t\to \infty} tp \text{ does not exists}\}
		\end{split}
	\end{equation*}
	are non-empty open subsets of $B$, such that there exist geometric quotients $B_-/\C^*$ and $B_+/\C^*$ satisfying
	$$B_-/\C^*\simeq X_1 \dashrightarrow X_2\simeq B_+/\C^*,$$
	where the birational equivalence is realized by the open subset $(B_-\cap B_+)/\C^*$ contained in $B_{\pm}/\C^*$.
\end{definition}

We stress that the notation in the above Definition is slightly different from the original one (cf. \cite[Definition 2]{Wlodarczyk}), in particular the role on $B_-$ and $B_+$ are switched. The reason behind this apparent misleading decision is that in this setting it will be less confusing in the next section to keep track of the $\pm$-signs.

The following definition is the core of the section.
Note that a similar definition appears in the contest of \emph{$K$-equivalence}, see for instance \cite{Kan2}.

\begin{definition}\label{def:Dflips}
	Consider a normal projective variety $\Lambda$ with $\rho_{\Lambda}=2$ admitting two projective bundle structures:
	\[
	\xymatrix{ & \Lambda \ar[ld]_{p_-} \ar[rd]^{p_+} & \\ \Lambda_- && \Lambda_+}
	\]
	A small modification $\psi: W_- \dashrightarrow W_+$ between normal projective varieties is called a \emph{rooftop flip modeled by $\Lambda$} if the following holds: 
	\begin{enumerate}
		\item There are small contractions $s_\pm: W_{\pm}\to W_0$, with $W_0$ a normal projective variety,
			\[
			\xymatrix{W_- \ar@{-->}[rr]^\psi \ar[rd]_{s_-} && W_+, \ar[ld]^{s_+} \\ & W_0 & }
			\]  such that, denoting by $Z_\pm \subset W_\pm$ their exceptional loci, the restrictions $s_\pm|_{Z_\pm}: Z_\pm \to Z_0 \subset W_0$ are smooth and the fibers are $\Lambda_\pm$-bundles.
		\item There is a resolution
			\[
			\xymatrix{ &W \ar[ld]_{b_-} \ar[rd]^{b_+} & \\ W_- \ar@{-->}[rr]_\psi && W_+}
			\]
			such that $Z:=b_\pm^{-1}(Z_\pm) \subset W$ is a divisor, and $b_\pm|_Z: Z \to Z_\pm$ defines projective bundle structures on $Z$.
	\item For any $z_0 \in Z_0$ we have that $b^{-1}_\pm|_{s^{-1}_\pm(z_0)}=p_\pm^{-1}$:
	\[
	\xymatrix@C0.5pt{ &(b_-^{-1}\circ s_-^{-1})(z_0)\simeq \Lambda \simeq (b_+^{-1}\circ s_+^{-1})(z_0) \ar[ld]_{p_-} \ar[rd]^{p_+} & \\ s_-^{-1}(z_0) \simeq \Lambda_- \ar[rd]_{s_-} %\ar@{-->}[rr] 
		&& \Lambda_+ \simeq s_+^{-1}(z_0) \ar[ld]^{s_+} \\ & z_0& }
	\]
	\end{enumerate}	
\end{definition}

\begin{remark}\label{remark:why}
	As we will see, the definition of rooftop flip includes some classical birational transformations: the Atiyah flip is a rooftop flip modeled by $\P^m\times \P^l$ (see \S\ref{ssec:atiyah}), and the classic Mukai flop (see \cite{HZ04}, \cite{WW}) is a rooftop flip modeled by $\P\left(T_{\P^2}\right)$ (see Corollary \ref{corollary:rooftopflipquadric}).
\end{remark}

\subsection{Atiyah rooftop flips}\label{ssec:atiyah}

In this section we show that the Atiyah flip is in particular a rooftop flip modeled by $\P^{m}\times \P^{l}$.

\begin{setup}\label{setup:atiyah}
	Let $V_-$ and $V_+$ denote the complex vector spaces of dimension respectively $m+1$ and $l+1$, with $l,m\ge 1$, and set $V:=V_-\oplus V_+$.
	Consider the $\C^*$-action on $V$ having weight $-1$ on $V_-$ and weight $1$ on $V_+$. 
	There is an induced $\C^*$-action on $V^\vee$ given by $t\cdot v= \left(t v_-,t^{-1}v_+\right)$, where $v=\left(v_-,v_+\right)\in V^\vee$.
	We will frequently abuse notation by writing $V_{-}$ (resp. $V_+$) for $V_-\times \{0\}$ (resp. $\{0\}\times V_+$).
\begin{figure}[h!]
	\centering

\tikzset{every picture/.style={line width=0.75pt}} %set default line width to 0.75pt        

\begin{tikzpicture}[x=0.6pt,y=0.6pt,yscale=-1,xscale=1]
%uncomment if require: \path (0,300); %set diagram left start at 0, and has height of 300

%Shape: Rectangle [id:dp18937098784415363] 
\draw   (20,20) -- (340,20) -- (340,180) -- (20,180) -- cycle ;
%Straight Lines [id:da1318788053019485] 
\draw    (20,20) -- (340,180) ;
%Straight Lines [id:da9054914373607481] 
\draw    (20,180) -- (340,20) ;
%Straight Lines [id:da9510825035310609] 
\draw    (30,210) -- (330,210) ;
\draw [shift={(186,210)}, rotate = 180] [color={rgb, 255:red, 0; green, 0; blue, 0 }  ][line width=0.75]    (10.93,-3.29) .. controls (6.95,-1.4) and (3.31,-0.3) .. (0,0) .. controls (3.31,0.3) and (6.95,1.4) .. (10.93,3.29)   ;
%Shape: Circle [id:dp05608194391391974] 
\draw  [fill={rgb, 255:red, 0; green, 0; blue, 0 }  ,fill opacity=1 ] (175,100) .. controls (175,97.24) and (177.24,95) .. (180,95) .. controls (182.76,95) and (185,97.24) .. (185,100) .. controls (185,102.76) and (182.76,105) .. (180,105) .. controls (177.24,105) and (175,102.76) .. (175,100) -- cycle ;
%Shape: Triangle [id:dp24989332432828237] 
\draw  [draw opacity=0][fill={rgb, 255:red, 144; green, 19; blue, 254 }  ,fill opacity=0.2 ] (180,100) -- (20,180) -- (20,20) -- cycle ;
%Shape: Triangle [id:dp5377581483415403] 
\draw  [draw opacity=0][fill={rgb, 255:red, 208; green, 2; blue, 27 }  ,fill opacity=0.2 ] (180,100) -- (340,20) -- (340,180) -- cycle ;
%Straight Lines [id:da7000627750204172] 
\draw    (30,40) -- (150,90) ;
%Straight Lines [id:da5322630758087111] 
\draw    (30,70) -- (150,95) ;
%Straight Lines [id:da8248391946037721] 
\draw    (30,160) -- (150,110) ;
%Straight Lines [id:da2719071105268458] 
\draw    (30,130) -- (150,105) ;
%Straight Lines [id:da7756346880199279] 
\draw    (210,95) -- (330,70) ;
%Straight Lines [id:da21911398751314515] 
\draw    (210,90) -- (330,40) ;
%Straight Lines [id:da2981416153893458] 
\draw    (210,110) -- (330,160) ;
%Straight Lines [id:da27585222842907664] 
\draw    (210,105) -- (330,130) ;
%Curve Lines [id:da11209114331809367] 
\draw    (40,25) .. controls (180.33,90.17) and (180.33,90.17) .. (320,25) ;
%Curve Lines [id:da3693485194056053] 
\draw    (55,25) .. controls (175,70.17) and (185.33,70.17) .. (305,25) ;
%Curve Lines [id:da664510041065356] 
\draw    (40,175) .. controls (180,110.75) and (180,110.5) .. (320,175) ;
%Curve Lines [id:da46252259164860765] 
\draw    (55,175) .. controls (180,130.17) and (180.33,130.17) .. (305,175) ;

% Text Node
\draw (180,215) node [anchor=north west][inner sep=0.75pt]    {$t$};
% Text Node
\draw (20,215) node [anchor=north west][inner sep=0.75pt]    {$\infty $};
% Text Node
\draw (325,215) node [anchor=north west][inner sep=0.75pt]    {$0$};
% Text Node
\draw (50,95) node [anchor=north west][inner sep=0.75pt]    {$V_{-}^{\lor }$};
% Text Node
\draw (285,95) node [anchor=north west][inner sep=0.75pt]    {$V_{+}^{\lor }$};
% Text Node
\draw (350,15) node [anchor=north west][inner sep=0.75pt]    {$V^{\lor }$};

\end{tikzpicture}
	\end{figure}
\end{setup}

\begin{remark}\label{remark:homoteties}
	The restriction of the $\C^*$-action on $V^\vee$ defined in Set-up \ref{setup:atiyah} to $V_-^\vee$ (resp. $V_+^\vee$) coincides with $\C^*_h$-action on $V^\vee_-$ (resp. $V^\vee_+$) by homoteties.
\end{remark}

 The fixed point locus of this action on $V^\vee$ coincides with the origin. 
 We can consider the induced $\C^*$-action of the coordinate ring of $V^\vee$, namely $\C[V^\vee]=\C\left[y_0,\ldots,y_m,x_0,\ldots,x_l\right]$.
 The GIT-quotient $V^\vee\to V^\vee\git \C^*:=\Spec \C[V^\vee]^{\C^*}$ is the affine cone over the Segre embedding of $\P\left(V_-\right)\times \P\left(V_+\right) \simeq \P^m \times \P^l$, therefore singular at the origin of $V_-^\vee \otimes V_+^\vee$.

\begin{remark}\label{remark:cobordism}
	Under the notation of Definition \ref{def: cobordism}, we have that
	\begin{align*}
	&B_- \simeq V^\vee\setminus \left\{y_0=\ldots=y_m=0\right\},\\
	&B_+\simeq V^\vee\setminus \left\{x_0=\ldots=x_l=0\right\}.
	\end{align*}
	In particular $B_{\pm}$ are non-empty open subsets of stable points under the $\C^*$-action, therefore we have two geometric quotients $B_{\pm}\to B_{\pm}/\C^*$.
\end{remark}

\begin{lemma}\label{lemma:atiyahexceptionallocus}
	The geometric quotients $B_-/\C^*$ and $B_+/\C^*$ are birational, and the exceptional locus of the birational map $\psi: B_-/\C^*\dashrightarrow B_+/\C^*$ is $\P\left(V_-\right)$. In particular, $\psi$ is a small modification.
\end{lemma}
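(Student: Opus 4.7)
The plan is to make both quotients explicit as total spaces of vector bundles over projective spaces, and then to identify the exceptional locus of $\psi$ with the corresponding zero section.

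First I would unpack the $\C^*$-action: on $V^\vee = V_-^\vee \oplus V_+^\vee$ the torus acts with weight $+1$ on $V_-^\vee$ and weight $-1$ on $V_+^\vee$, so, writing $v = (v_-, v_+)$, the limit $\lim_{t \to \infty} t\cdot v$ fails to exist precisely when $v_- \neq 0$, and dually for $t \to 0$. This matches the coordinate description of $B_\pm$ from Remark \ref{remark:cobordism} and gives
\[
B_- = (V_-^\vee \setminus \{0\}) \times V_+^\vee, \qquad B_+ = V_-^\vee \times (V_+^\vee \setminus \{0\}).
\]

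Next I would describe $B_-/\C^*$ via the associated bundle construction. The projection $B_- \to V_-^\vee \setminus \{0\}$ is $\C^*$-equivariant for the weight $+1$ action on the target, so the geometric quotient is
\[
B_-/\C^* \;\simeq\; (V_-^\vee \setminus \{0\}) \times_{\C^*} V_+^\vee,
\]
which is the total space of a rank $l+1$ vector bundle over $\P(V_-) = (V_-^\vee \setminus \{0\})/\C^*$ (in the paper's convention, 1-dimensional quotients of $V_-$ correspond exactly to the $\C^*$-orbits on $V_-^\vee\setminus\{0\}$). Symmetrically, $B_+/\C^*$ is the total space of a rank $m+1$ vector bundle over $\P(V_+)$. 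Under these identifications, the inclusions $V_\pm^\vee \setminus \{0\} \hookrightarrow B_\pm$ descend to zero sections $\P(V_\pm) \hookrightarrow B_\pm/\C^*$.

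To exhibit the birational equivalence I would observe that
\[
B_- \cap B_+ = (V_-^\vee \setminus \{0\}) \times (V_+^\vee \setminus \{0\}),
\]
so $(B_- \cap B_+)/\C^*$ is the complement of the zero section in either of the two vector bundles above, and this common description realizes $\psi$. The exceptional locus of $\psi$ on the $B_-/\C^*$ side is therefore the zero section $\P(V_-)$, of codimension $l+1 \geq 2$; symmetrically on the $B_+/\C^*$ side it is $\P(V_+)$ of codimension $m+1 \geq 2$, so $\psi$ is a small modification. The only substantive step is the associated-bundle identification of $B_\pm/\C^*$; once that is in hand, everything else is a matter of restricting an isomorphism and a codimension count, so I do not anticipate any serious obstacle.
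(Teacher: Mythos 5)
Your proof is correct, and it takes a more structural route than the paper's. The paper argues pointwise: it picks a line $\hat{p}\subset V_-^\vee$ through the origin, checks that $\lim_{t\to\infty}t\cdot q$ fails to exist for $q\in\hat{p}\setminus 0$ (so $\P(V_-)\subset B_-/\C^*$) while $\lim_{t\to 0}t\cdot q$ does exist (so these points are missing from $B_+$), and then asserts the codimension bound. You instead globalize the same starting observation, $B_-=(V_-^\vee\setminus\{0\})\times V_+^\vee$, into an identification of $B_-/\C^*$ with the total space of a rank $l+1$ vector bundle over $\P(V_-)$ via the associated-bundle construction, with the exceptional locus appearing as the zero section; this buys you an explicit construction of the geometric quotient (which the paper delegates to Remark \ref{remark:cobordism}), a transparent codimension count ($l+1\geq 2$ and $m+1\geq 2$ rather than an unquantified ``$\geq 2$''), and a concrete model of the Atiyah flip as the exchange of the two zero sections. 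The one point both you and the paper leave implicit is that $\psi$ genuinely fails to extend as an isomorphism across the zero section (so that the exceptional locus is exactly $\P(V_-)$ and not smaller); in your picture this is immediate, since near the zero section the class of $(v_-,v_+)$ would have to map to $[v_+]\in\P(V_+)$ under the other bundle projection, which has no limit as $v_+\to 0$. This is a cosmetic remark, not a gap relative to the paper's own standard of rigor.
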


\begin{proof}
	Notice that $B_-/\C^*$ and $B_+/\C^*$ are birational since they contain the open subset $(B_-\cap B_+)/\C^*\neq \emptyset$ %= (V^\vee\setminus \left(\left\{y_0=\ldots=y_m=0\right\} \cup \left\{x_0=\ldots=x_l=0\right\} \right))/\C^*\neq \emptyset$.
	Let us study the exceptional locus of $\psi$. 
	It suffices to show that $V_-^\vee\setminus 0\subset B_-$ (then $\P\left(V_-\right)\subset B_-/\C^*$) and that $V_-^\vee\setminus 0\not\subset B_+$. 
	Let $p\in \P\left(V_-\right)$, and consider the associated line $\hat{p}$ in $V_-^\vee$ passing through the origin. Then $\hat{p}=\left(hv_-,0\right)$, for $h\in \C^*_h$ and $v_-\in V_-^\vee$. 
	For any point $q\in \hat{p} \setminus 0$ we have that that $\lim_{t\to \infty} t\cdot q$ does not exist, hence every point $q$ of $\hat{p}\setminus 0$ belongs to $B_-$. 
	By Remark \ref{remark:homoteties}, the restriction of the $\C^*$ on $V_-^\vee \setminus 0$ coincides with the action by homoteties, so we have that $p\in B_-/\C^*$. 
	On the other hand it is easy to see that $\lim_{t\to 0} t\cdot q$ exists, hence $q\notin B_+$.
	 Finally $\psi$ is a small modification since $\codim_{B_-/\C^*} \text{Exc}(\psi)\geq 2$. 
\end{proof}

\begin{remark}
	Let us recall that the example of Atiyah flip can be also easily described using toric geometry. In particular the birational map is induced by two different subdivisions of a cone. We refer to \cite[\S 3]{BR} for a detailed discussion from the toric point of view.
\end{remark}

\begin{corollary}
	The exceptional locus of $\psi^{-1}: B_+/\C^*\dashrightarrow B_-/\C^*$ is $\P(V_+)$.
\end{corollary}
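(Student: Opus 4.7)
The plan is to invoke Lemma \ref{lemma:atiyahexceptionallocus} via the manifest symmetry of Set-up \ref{setup:atiyah}. The entire construction is invariant under the involution that simultaneously swaps the summands $V_-\leftrightarrow V_+$ and replaces the parameter $t$ by $t^{-1}$: the weights of the $\C^*$-action on $V$ and hence on $V^\vee$ are preserved up to this relabeling. The defining conditions for $B_-$ and $B_+$ in Definition \ref{def: cobordism} involve limits as $t\to 0$ versus $t\to\infty$, so under the swap the open sets $B_-$ and $B_+$ are interchanged and the birational map $\psi:B_-/\C^*\dashrightarrow B_+/\C^*$ is replaced by its inverse $\psi^{-1}:B_+/\C^*\dashrightarrow B_-/\C^*$. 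Applying Lemma \ref{lemma:atiyahexceptionallocus} to the symmetrized setup therefore yields exactly $\Exc(\psi^{-1})=\P(V_+)$.

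If one prefers to avoid the symmetry shortcut, one can transcribe the proof of the lemma with the roles of $V_-$ and $V_+$ reversed. For $p\in\P(V_+)$, the associated line $\hat p\subset V_+^\vee$ through the origin is, by Remark \ref{remark:homoteties}, a single $\C^*$-orbit (together with the fixed origin) mapping to $p$ in the quotient. Since the weight of the action on $V_+^\vee$ is $-1$, for any $q\in\hat p\setminus 0$ we have $\lim_{t\to\infty} t\cdot q=\lim_{t\to\infty} t^{-1}q=0$, while $\lim_{t\to 0} t\cdot q$ fails to exist. Hence $q\in B_+\setminus B_-$, so $\P(V_+)$ is contained in $B_+/\C^*$ but not in the common isomorphism locus $(B_-\cap B_+)/\C^*$, giving the inclusion $\P(V_+)\subset\Exc(\psi^{-1})$. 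The reverse inclusion is immediate because $B_+/\C^*\setminus\P(V_+)$ is precisely the image of $B_-\cap B_+$ in the quotient, and there $\psi^{-1}$ restricts to an isomorphism onto its image.

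The only subtle point is the bookkeeping of signs on $V^\vee$: once one records that dualization flips the $\C^*$-weights, so that the factor playing the role of $V_-^\vee$ in Lemma \ref{lemma:atiyahexceptionallocus} is now $V_+^\vee$, the statement is essentially a formal consequence of the lemma, and no further computation is required.
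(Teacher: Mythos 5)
Your proof is correct and matches the paper's (implicit) reasoning: the corollary is stated there without proof precisely because it follows from Lemma \ref{lemma:atiyahexceptionallocus} by the symmetry swapping $V_-\leftrightarrow V_+$ and $t\leftrightarrow t^{-1}$, which exchanges $B_-$ and $B_+$. Your explicit verification of the limits on $V_+^\vee$ (weight $-1$, so the limit exists as $t\to\infty$ and fails as $t\to 0$) is consistent with Set-up \ref{setup:atiyah} and Remark \ref{remark:cobordism}, so nothing is missing.
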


\begin{remark}\label{remark:atiyahblowup}
	Let $\beta: W\to V^\vee\git \C^*$ be the blow-up of $V^\vee \git \C^*$ along the origin. Then the exceptional divisor is $\P(V_-)\times \P(V_+)$. 
\end{remark}

\begin{lemma}\label{lemma:atiyahfactorization}
	The blow-up $\beta: W\to V^\vee\git \C^*$ can be factorized through $b_{\pm}: W\to B_{\pm}/\C^*$ and the small contractions $s_{\pm}: B_{\pm}/\C^*\to V^\vee\git \C^*$ with exceptional loci $\P(V_{\pm})$. 
\end{lemma}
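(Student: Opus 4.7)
The plan is to give an explicit model of $B_\pm/\C^*$ as total spaces of vector bundles, identify $s_\pm$ as the contraction of a zero section, and then realize $W$ as the common blow-up of these zero sections. I will spell out the argument on the $B_-/\C^*$ side, the $B_+/\C^*$ side being entirely symmetric.

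First I would describe $B_-/\C^*$ concretely. By Remark \ref{remark:cobordism}, $B_-$ is the open set $\{y\neq 0\}\subset V^\vee$, and by Set-up \ref{setup:atiyah} the $\C^*$-action has weight $+1$ on the $y$-coordinates and $-1$ on the $x$-coordinates. Hence $(y,x)\mapsto [y]$ is a $\C^*$-invariant morphism $B_-\to \P(V_-)$, and local trivialization of the tautological bundle $V_-^\vee\setminus 0\to \P(V_-)$ identifies $B_-/\C^*$ with the total space of the rank-$(l+1)$ vector bundle $V_+^\vee\otimes\cO_{\P(V_-)}(-1)$ over $\P(V_-)$, whose zero section $\{x=0\}/\C^*$ is exactly $\P(V_-)$.

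Second I would construct $s_-$. The invariant subring $\C[V^\vee]^{\C^*}$ is generated by the weight-zero monomials $y_i x_j$, so these descend to functions on $B_-/\C^*$ and yield a morphism $s_-: B_-/\C^*\to V^\vee\git\C^*$. All invariants vanish on $\{x=0\}$, so $s_-(\P(V_-))$ is the origin; conversely, on the open locus $\{y\neq 0,\,x\neq 0\}$ any $\C^*$-orbit is recovered from the values $y_ix_j$, so $s_-$ is an isomorphism there. Since $\codim_{B_-/\C^*}\P(V_-)=l+1\ge 2$, the morphism $s_-$ is a small contraction with exceptional locus $\P(V_-)$, as claimed.

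Third I would identify $W$ with $\tl W:=\Bl_{\P(V_-)}(B_-/\C^*)$. The exceptional divisor of $\tl W$ is the projectivization of the normal bundle $V_+^\vee\otimes\cO_{\P(V_-)}(-1)$, which is $\P(V_-)\times\P(V_+)$ since projectivization is insensitive to twisting by a line bundle. The composition $\tl W\to B_-/\C^*\xrightarrow{s_-} V^\vee\git\C^*$ is a proper birational morphism which is an isomorphism away from the origin; the pullback of the maximal ideal of $0\in V^\vee\git\C^*$ equals the ideal of $\P(V_-)\subset B_-/\C^*$, which becomes principal on $\tl W$. By the universal property of the blow-up $\beta$, the composition factors through a unique morphism $\tl W\to W$, which is a proper birational morphism between smooth varieties restricting to an isomorphism on the complements of the exceptional divisors $\P(V_-)\times\P(V_+)$ (Remark \ref{remark:atiyahblowup}); hence it is an isomorphism. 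Setting $b_-$ equal to this identification followed by the blow-up map gives $\beta=s_-\circ b_-$. Applying the mirror construction provides $b_+$ and $s_+$.

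The only genuine subtlety is the universal-property step in the third paragraph: one must verify that the ideal sheaf pullback is Cartier on $\tl W$, which follows from the two successive blow-up descriptions, and then that the resulting map $\tl W\to W$ has no codimension-one exceptional locus. Everything else is a direct unwinding of the definitions of $B_\pm/\C^*$ and of the GIT quotient.
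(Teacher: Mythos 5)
Your argument is correct, but it follows a genuinely different route from the paper's. The paper obtains $s_\pm$ abstractly as the natural GIT morphisms from the geometric quotients of the stable loci $B_\pm$ to the affine quotient $V^\vee\git\C^*$, isomorphisms over the locus of closed orbits (whence $\Exc(s_\pm)=\P(V_\pm)$), and then produces $b_\pm$ by extending the induced birational maps $W\dashrightarrow B_\pm/\C^*$ to morphisms and checking the factorization over the origin. You instead give an explicit model: $B_-/\C^*$ as the total space of $V_+^\vee\otimes\cO_{\P(V_-)}(-1)$, $s_-$ built from the invariants $y_ix_j$, and $b_-$ obtained by showing via the universal property of the blow-up that $\Bl_{\P(V_-)}\left(B_-/\C^*\right)\to V^\vee\git\C^*$ factors through $W$ and that the comparison map is an isomorphism. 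Your version is longer but buys two things: it replaces the paper's thinnest step --- the one-line assertion that the birational maps $b_\pm$ extend to morphisms --- by an actual construction, and it exhibits $b_\pm$ as blow-ups along $\P(V_\pm)$ with exceptional divisor $\P(V_-)\times\P(V_+)$, which is exactly the structure used later to verify conditions (2)--(3) of Definition \ref{def:Dflips}; the paper's version is shorter and stays entirely within the GIT formalism. Two small points you flag but should close. First, to see that $\widetilde{W}\to W$ is an isomorphism you must rule out contraction of the exceptional divisor: both exceptional divisors are the irreducible $(m+l)$-dimensional preimages of the origin, so the induced map between them is surjective, no divisor is contracted, and Zariski's main theorem for a proper birational morphism of smooth varieties concludes. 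Second, $s_-$ must be proper for it to be a contraction in the paper's sense; this follows because $(\mathrm{pr},s_-)$ embeds $B_-/\C^*$ as a closed subvariety of $\P(V_-)\times\left(V^\vee\git\C^*\right)$.
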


\begin{proof}
	There exist two natural birational morphisms $s_{\pm}: B_{\pm}/\C^*\to V^\vee\git \C^*$, isomorphisms over the set of points which are semistable but not stable. From this it follows that $\Exc(s_{\pm})=\P(V_{\pm})$. Since they have codimension greater than two, we conclude $s_{\pm}$ are small contractions. 
	Moreover, since $W$ is birational to $V^\vee\git \C^*$, which is birational to $B_{\pm}/\C^*$, we conclude there exist birational maps $b_{\pm}: W\dashrightarrow  B_{\pm}/\C^*$, defined over $W\setminus (\P(V_-)\times \P(V_+))$. Since the exceptional locus has codimension one, $b_{\pm}$ are morphisms.	
	Let us prove that $b_{\pm}\circ s_{\pm}=\beta$. For sake of simplicity, let us consider $b_-\circ s_-$. It is immediate to notice that $b_-^{-1}(s_-^{-1}(0))=b_-^{-1}(\P(V_-))=\P(V_-)\times \P(V_+) =\beta^{-1}(0)$. 
\end{proof}

We summarize the above construction by means of the diagram

%\begin{center}
\[
\xymatrix{ & W \ar[ld]_{b_-} \ar[rd]^{b_+} \ar[dd]^<<<<<<\beta & \\ B_-/\C^* \ar@{-->}[rr]^<<<<<<\psi \ar[rd]_{s_-} && B_+/\C^* \ar[ld]^{s_+} \\ & V^\vee \git \C^* & }
\]

and its restriction to the exceptional loci:
\[
\xymatrix{ & \P\left(V_-\right) \times \P\left(V_+\right) \ar[ld]_{b_-} \ar[rd]^{b_+}  & \\ \P\left(V_-\right) \ar[rd]_{s_-} && \P\left(V_+\right) \ar[ld]^{s_+} \\ & 0 &}
\]

\begin{theorem}
	The birational map $\psi:B_-/\C^*\dashrightarrow B_+/\C^*$ is a rooftop flip modeled by $\P(V_-)\times \P(V_+)$.
\end{theorem}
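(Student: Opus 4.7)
The plan is to verify the three conditions of Definition \ref{def:Dflips} one by one, with the model variety $\Lambda = \P(V_-) \times \P(V_+)$ and the two natural factor projections $p_\pm \colon \Lambda \to \Lambda_\pm := \P(V_\pm)$ as projective bundle structures. All the technical work has already been done in Lemmas \ref{lemma:atiyahexceptionallocus}--\ref{lemma:atiyahfactorization} and Remark \ref{remark:atiyahblowup}; the proof will mostly consist in assembling these results and pointing to the correct identifications.

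For condition (1), I would take $W_0 := V^\vee \git \C^*$ together with the small contractions $s_\pm \colon B_\pm/\C^* \to V^\vee\git \C^*$ produced in Lemma \ref{lemma:atiyahfactorization}. Their exceptional loci are $Z_\pm = \P(V_\pm)$, and the only singularity of $W_0$ is the vertex of the affine cone, so $Z_0 = \{0\}$. The restricted maps $s_\pm|_{Z_\pm} \colon \P(V_\pm) \to \{0\}$ are then trivially smooth, and each fibre is $\Lambda_\pm = \P(V_\pm)$, which is vacuously a $\Lambda_\pm$-bundle over a point.

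For condition (2), I would take $W$ to be the blow-up of $V^\vee\git\C^*$ at the origin from Remark \ref{remark:atiyahblowup}, together with the resolution maps $b_\pm \colon W \to B_\pm/\C^*$ given by Lemma \ref{lemma:atiyahfactorization}. The factorization $\beta = s_\pm \circ b_\pm$, combined with the fact that $\beta^{-1}(0) = \P(V_-) \times \P(V_+)$ and $s_\pm^{-1}(0) = \P(V_\pm)$, implies that $Z := b_\pm^{-1}(Z_\pm) = \P(V_-) \times \P(V_+)$ is a divisor in $W$ (it is the exceptional divisor of $\beta$) and that the restrictions $b_\pm|_Z \colon \P(V_-) \times \P(V_+) \to \P(V_\pm)$ coincide with the two standard projections, hence are (trivial) projective bundle structures with fibres $\P(V_\mp)$. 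This identification with the projections is really the only non-formal step in the argument, and I expect it to be the main (and minor) obstacle; it can be read off directly from the explicit GIT description of $B_\pm/\C^*$ given in Remark \ref{remark:cobordism}, since the quotient parametrizes orbits of pairs $(v_-, v_+)$ with $v_\pm \neq 0$, and as $(v_-,v_+)$ specializes to $(v_-, 0)$ (resp.\ $(0, v_+)$) one recovers $[v_-] \in \P(V_-)$ (resp.\ $[v_+] \in \P(V_+)$) as the limit.

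Condition (3) is then immediate: since $Z_0 = \{0\}$ is a single point, I only need to compute the fibres above $z_0 = 0$, which by the previous steps are $s_\pm^{-1}(0) = \Lambda_\pm$ and $(b_\pm^{-1} \circ s_\pm^{-1})(0) = \Lambda$, with the restricted contractions being exactly the factor projections $p_\pm$. Collecting the three verifications proves that $\psi$ is a rooftop flip modeled by $\P(V_-) \times \P(V_+)$.
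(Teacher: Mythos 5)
Your proposal is correct and follows essentially the same route as the paper: both verify the three conditions of Definition \ref{def:Dflips} by assembling Lemma \ref{lemma:atiyahexceptionallocus}, Lemma \ref{lemma:atiyahfactorization} and Remark \ref{remark:atiyahblowup}, with $W_0 = V^\vee\git\C^*$, $Z_0=\{0\}$, and the exceptional divisor $\P(V_-)\times\P(V_+)$ of the blow-up as the rooftop. The only difference is that you spell out why $b_\pm|_Z$ are the two factor projections (via the GIT/limit description), a point the paper simply asserts.
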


\begin{proof}
	We verify that each condition of Definition \ref{def:Dflips} is satisfied.
	For $(1)$, Using Lemma \ref{lemma:atiyahexceptionallocus}, the birational map $\psi: B_-/\C^*\dashrightarrow B_+/\C^*$ is a small modification, the exceptional loci of $s_{\pm}: B_{\pm}/\C^*\to \hat{X}\git \C^*$ are $\P(V_{\pm})$ and the restriction $s_{\pm}|_{\P(V_{\pm})}: \P(V_{\pm}) \to 0$ are obviously $\P(V_{\pm})$-bundles. 
	Let us prove verify $(2)$: if we consider the resolution $b_{\pm}: W\to B_{\pm}/\C^*$ we have that $\P(V_-)\times \P(V_+)=b^{-1}_{\pm}(\P(V_{\pm}))$ is a divisor, and $\P(V_-)\times \P(V_+)\to \P(V_{\pm})$ defines two projective bundle structures.
	Finally, we consider $(3)$: In this case $Z_0$ is the origin, and we know that $s^{-1}_{\pm}(0)\simeq \P(V_{\pm})$. Moreover $(b_{\pm}^{-1}\circ s_{\pm}^{-1})(0)\simeq \P(V_-)\times \P(V_+)$, hence we conclude.
\end{proof}

\begin{corollary}
	The geometric quotients $B_{\pm} /\C^*$ are smooth, and the rooftop flip $\psi: B_-/\C^*\dashrightarrow B_+/\C^*$ is in particular a small $\Q$-factorial modification.
\end{corollary}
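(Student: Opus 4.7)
The plan is to establish smoothness of the geometric quotients $B_\pm/\C^*$ first, and then deduce the small $\Q$-factorial modification property as a direct consequence of Lemma \ref{lemma:atiyahexceptionallocus}.

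Unpacking the description of $B_-$ in Remark \ref{remark:cobordism}: the vanishing locus $\{y_0=\cdots=y_m=0\}$ coincides with the linear subspace $V_+^\vee\subset V^\vee$, so $B_-=V^\vee\setminus V_+^\vee=(V_-^\vee\setminus\{0\})\times V_+^\vee$. By Set-up \ref{setup:atiyah} and Remark \ref{remark:homoteties}, the $\C^*$-action on $V_-^\vee$ has weight $+1$ and so acts freely on $V_-^\vee\setminus\{0\}$ with geometric quotient $\P(V_-)\simeq\P^m$. The first projection $B_-\to V_-^\vee\setminus\{0\}$ is $\C^*$-equivariant, which realizes $B_-/\C^*$ as the total space of the rank-$(l+1)$ vector bundle over $\P(V_-)$ associated to the principal $\C^*$-bundle $V_-^\vee\setminus\{0\}\to\P(V_-)$ via the weight-$(-1)$ representation of $\C^*$ on $V_+^\vee$. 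Being the total space of a vector bundle over a smooth base, $B_-/\C^*$ is smooth; by the symmetric argument (exchanging the roles of $-$ and $+$) the same holds for $B_+/\C^*$.

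Once smoothness is in hand, the second assertion is immediate. Lemma \ref{lemma:atiyahexceptionallocus} already shows that $\psi$ is a small modification, with exceptional loci $\P(V_\pm)$ of codimension $l+1$ and $m+1$ respectively (both $\geq 2$ since $l,m\geq 1$). Since smooth varieties are factorial, hence $\Q$-factorial, $\psi:B_-/\C^*\dashrightarrow B_+/\C^*$ is by definition a small $\Q$-factorial modification.

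No serious obstacle is anticipated. The only point requiring a moment's care is that the geometric quotient of a smooth variety by a free $\C^*$-action is smooth; this is standard and is in any case made manifest by the explicit vector-bundle description given above.
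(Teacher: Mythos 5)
Your proof is correct. It shares its core with the paper's own argument --- $B_\pm$ is a smooth open subset of the affine space $V^\vee$ on which $\C^*$ acts freely, hence the geometric quotient is smooth --- but the two justify that step differently. The paper cites \cite[Corollary p.199]{MFK} to conclude that $B_\pm\to B_\pm/\C^*$ is a principal $\C^*$-bundle and reads off smoothness of the base from smoothness of the total space. You instead make the quotient completely explicit: writing $B_-=(V_-^\vee\setminus\{0\})\times V_+^\vee$ and quotienting along the first factor exhibits $B_-/\C^*$ as the total space of the rank-$(l+1)$ vector bundle $V_+^\vee\otimes\cO_{\P(V_-)}(-1)$ over $\P(V_-)$, i.e.\ the classical small resolution of the cone over the Segre embedding. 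This buys a citation-free, self-contained argument together with a concrete identification of the quotient; the paper's route is shorter and, more importantly, transfers verbatim to the restricted quotients $\hat{X}\cap B_\pm/\C^*$ of Section \ref{sec:mainresult}, where no such product decomposition is available. Your treatment of the second assertion (smooth implies $\Q$-factorial, plus Lemma \ref{lemma:atiyahexceptionallocus} for smallness and the codimension count $l+1,\,m+1\geq 2$) is in fact slightly more complete than the paper's, which leaves that implication implicit.
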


\begin{proof}
	Since the non-empty open subset $B_{\pm}$ are smooth, and $\C^*$ acts freely on them, using \cite[Corollary p.199]{MFK} we obtain that $B_{\pm}$ are $\C^*$-principal bundles over $B_{\pm}/\C^*$, hence they are smooth. 
\end{proof}

%!TEX root = BF.tex

\section{Main result}\label{sec:mainresult}
	
This section is devoted to prove Theorem \ref{theorem:naive}; we will adapt the example of Atiyah flip explained in Section \ref{sec:atiyah} in greater generality.

\begin{setup}\label{setup:maintheorem}
	Let $X$ be a smooth drum constructed upon a triple $(Y,\cL_-,\cL_+)$ (cfr. \S \ref{ssec:drums}).
	Let $\hat{X}$ be the affine cone over $X$, contained in the affine space $V^\vee:=V_-^\vee\oplus V_+^\vee$ (cfr. Remark \ref{remark:drumembedding}), where 
	\[
	V_-:=\HH^0(Y_-,L_-), \qquad V_+:=\HH^0(Y_+,L_+).
	\]
	The $\C^*$-action on $V^\vee$ is given by $t\cdot v=\left (t v_-,t^{-1}v_+\right)$, where $v=\left(v_-,v_+\right)\in V^\vee$.
\end{setup}
		
Notice that the $\C^*$-action on $V^\vee$ is induced by a $\C^*$-action on $V$ as described in Set-up \ref{setup:atiyah}. 

Consider the restriction of the $\C^*$-action on $V^\vee$ to $\hat{X}$; there exists a GIT quotient $\hat{X}\to \hat{X}\git\C^*$, singular at the origin. Moreover we can also consider the intersection $\hat{X}\cap B_{\pm}$, which are non-empty open subsets of stable points giving  geometric quotients $\pi_{\pm}:\hat{X}\cap B_{\pm}\to \hat{X}\cap B_{\pm}/\C^*$. 
	
\begin{proposition}\label{proposition:mainexceptionallocus}
	There exist a small modification \[\psi: \hat{X}\cap B_-/\C^*\dashrightarrow \hat{X}\cap B_+/\C^*,\] whose exceptional locus is $Y_-$. 
\end{proposition}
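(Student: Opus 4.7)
The plan is to mimic the proof of Lemma \ref{lemma:atiyahexceptionallocus} almost step for step, treating the current situation as the restriction of the Atiyah cobordism on $V^\vee$ to the affine cone $\hat X$. By construction $\hat X$ is $\C^*$-invariant (the $\C^*$-action on $V^\vee$ of Set-up \ref{setup:atiyah} preserves $X \subset \P(V^\vee)$ by Remark \ref{remark:sections}), so we can intersect with the open subsets $B_\pm$ from Remark \ref{remark:cobordism} and restrict all the structure from \S\ref{ssec:atiyah}.

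First, I would establish the birational equivalence $\psi$. The open sets $\hat X \cap B_-$ and $\hat X \cap B_+$ share the non-empty open subset $\hat X \cap B_- \cap B_+$ (non-empty because $X$ is not contained in either linear subspace $\P(V_\pm^\vee)$, hence $\hat X$ meets the complement of $V_-^\vee \cup V_+^\vee$ in $V^\vee$). Since $\C^*$ acts freely on the stable locus $B_-\cap B_+$, the induced geometric quotient of this open subset sits as a common dense open inside both $\hat X \cap B_\pm/\C^*$, providing the birational map $\psi$.

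Next I would identify the exceptional locus. By Remark \ref{remark:sections}, the sink $Y_-$ of the $\C^*$-action on $X$ is embedded linearly in $\P(V_-^\vee) \subset \P(V^\vee)$, so the closed subvariety $\hat X \cap V_-^\vee$ equals the affine cone $\hat Y_-$ over $Y_-$. By Remark \ref{remark:homoteties}, the restricted $\C^*$-action on $V_-^\vee$ coincides with $\C^*_h$, so $(\hat Y_- \setminus 0)/\C^* \simeq Y_-$. For any $p \in \hat Y_- \setminus 0$, the limit $\lim_{t \to \infty} t \cdot p$ does not exist while $\lim_{t \to 0} t \cdot p = 0$ does, so $\hat Y_- \setminus 0 \subset \hat X \cap B_-$ but $\hat Y_- \setminus 0 \cap B_+ = \emptyset$. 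Passing to quotients, $Y_- \subset \hat X \cap B_-/\C^*$ is disjoint from the locus where $\psi$ is defined, so $Y_- \subseteq \Exc(\psi)$. Conversely, on the stable locus $\hat X \cap B_- \cap B_+$ the map $\psi$ restricts to an isomorphism, and any point of $\hat X \cap B_- \setminus (\hat X \cap B_- \cap B_+)$ lies over $\hat Y_-$; this forces the reverse inclusion $\Exc(\psi) \subseteq Y_-$.

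Finally I would check the codimension. By Remark \ref{remark:drumdimension}, $\dim(\hat X \cap B_-/\C^*) = \dim X = \dim Y + 1$, whereas $\dim Y_- = \dim Y - \dim F_-$ with $F_-$ a positive-dimensional fiber of the projective bundle $p_-: Y \to Y_-$ (Theorem \ref{theorem:smoothdrum}). Hence $\codim Y_- = \dim F_- + 1 \geq 2$, so $\psi$ is a small modification.

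The main obstacle I expect is the second step: rigorously identifying $\hat X \cap V_-^\vee$ with $\hat Y_-$ and verifying that the whole exceptional locus is exhausted by $Y_-$ (i.e., not just that $Y_- \subseteq \Exc(\psi)$, but also the reverse inclusion). The former requires careful use of the drum embedding from Remark \ref{remark:drumembedding} and the fact that $Y_\pm$ are cut out by the vanishing of the complementary block of coordinates; the latter requires knowing that semistable-but-not-stable points of $\hat X$ are exactly those in $\hat Y_- \cup \hat Y_+$, which follows from the weight analysis in Set-up \ref{setup:atiyah} combined with the fact that $\hat X \cap V_\pm^\vee = \hat Y_\pm$.
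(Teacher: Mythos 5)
Your proposal is correct and follows essentially the same route as the paper: both restrict the Atiyah cobordism on $V^\vee$ to the $\C^*$-invariant cone $\hat X$, reduce the identification of $\Exc(\psi)$ to the equality $\hat X\cap V_-^\vee=\hat Y_-$ together with Remark \ref{remark:homoteties}, and finish with the codimension bound. Your version is if anything slightly more explicit than the paper's (the reverse inclusion $\Exc(\psi)\subseteq Y_-$ and the computation $\codim Y_-=\dim F_-+1\ge 2$ are spelled out rather than asserted), so no changes are needed.
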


\begin{proof}
	The existence of such a birational map is immediate after noticing that $\hat{X}\cap B_-\cap B_+$ is open and non-empty.
	Let us prove that the exceptional locus of $\psi$ is $Y_-$. It suffices to show that 
	$$(\hat{X}\cap B_-/\C^*) \cap \P(V_-) =Y_-.$$
	Notice that the $\supset$ inclusion is trivial, so let us focus on the $\subset$ inclusion. Let $p\in (\hat{X}\cap B_-/\C^*) \cap \P(V_-)$, and let $\hat{p}=(hv_-,0)$ the corresponding line in $V_-^\vee$ through the origin, with $h\in \C^*_h$. The preimage $\pi^{-1}_-(p)$ is a closed orbit $\C^*\cdot q$ such that $\lim_{t\to \infty} t\cdot q$ does not exist, hence $\C^*\cdot q=\{(tv_-,v_+)\mid t\in \C^*\}$. Since $p$ belongs to the intersection, $\C^*\cdot q=\{(tv_-,0)\mid t\in \C^*\}$. Therefore since the restriction to $V^\vee_-$ of the $\C^*$action on $V^\vee$ coincides with the $\C^*_h$-action on $V^\vee_-$ by homoteties, we have that $\hat{p}=\C^*\cdot q$. We conclude since $\hat{X}\cap V_-^\vee=\hat{Y_-}$, therefore $p\in Y_-$. Since $\codim_{\hat{X}\cap B_{\pm}/\C^*} (Y) \geq 2$, we conclude.
\end{proof}

\begin{corollary}	
The exceptional locus of the birational map $\psi^{-1}$ is $Y_+$.
\end{corollary}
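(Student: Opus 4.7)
The proof plan is essentially to mirror the argument of Proposition \ref{proposition:mainexceptionallocus} by exchanging the roles of the $-$ and $+$ indices, exploiting the built-in symmetry of Set-up \ref{setup:maintheorem}. Observe that the $\C^*$-action on $V^\vee$ given by $t\cdot(v_-,v_+)=(tv_-,t^{-1}v_+)$ is reversed by the involution $t\mapsto t^{-1}$, which swaps $V_-^\vee\leftrightarrow V_+^\vee$ and, in the language of Definition \ref{def: cobordism}, exchanges $B_-\leftrightarrow B_+$. The drum $\hat X\subset V^\vee$ is defined intrinsically by the embedding of Remark \ref{remark:drumembedding} and is preserved by this involution, so the entire construction of $\psi$ and its inverse is interchanged by swapping the signs. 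Consequently the statement about $\Exc(\psi^{-1})$ follows directly from the statement proved for $\Exc(\psi)$.

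Concretely, I would show $(\hat X\cap B_+/\C^*)\cap \P(V_+)=Y_+$ and then conclude by the codimension argument already used in the previous proposition. The $\supset$ inclusion is immediate: every point of $\hat Y_+=\hat X\cap V_+^\vee$ yields a $\C^*$-orbit contained in $\hat X\cap B_+$ whose class lies in $\P(V_+)$. For the $\subset$ inclusion, take $p\in(\hat X\cap B_+/\C^*)\cap \P(V_+)$; the preimage $\pi_+^{-1}(p)$ is a closed $\C^*$-orbit $\C^*\cdot q$ on which $\lim_{t\to 0}t\cdot q$ fails to exist, so $q=(v_-,v_+)$ with $v_+\neq 0$, and the orbit has the form $\{(tv_-,t^{-1}v_+):t\in\C^*\}$. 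Since the class $p$ also lies in $\P(V_+)$, the line $\hat p\subset V^\vee$ through the origin representing $p$ is contained in $V_+^\vee$, which forces $v_-=0$. By Remark \ref{remark:homoteties}, the restriction of the $\C^*$-action to $V_+^\vee$ is the homothety action, so $\C^*\cdot q=\hat p\setminus\{0\}\subset \hat X\cap V_+^\vee=\hat Y_+$, giving $p\in Y_+$.

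The smallness of $\psi^{-1}$ then follows from $\codim_{\hat X\cap B_+/\C^*}Y_+\geq 2$, exactly as in the proof of Proposition \ref{proposition:mainexceptionallocus}. I do not foresee a genuine obstacle: the only minor point worth checking is that the involution $t\mapsto t^{-1}$ really does identify the whole setup with its mirror (in particular that the drum embedding uses $V_-$ and $V_+$ symmetrically and that $\deg(\cL_\mp|_{F_\pm})=1$ is a symmetric condition in Set-up \ref{setup:smoothdrum}), after which the corollary is formally a relabeling of Proposition \ref{proposition:mainexceptionallocus}.
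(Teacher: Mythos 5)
Your proof is correct and is exactly what the paper intends: the corollary is stated without proof because it is the mirror image of Proposition \ref{proposition:mainexceptionallocus} under the sign swap $-\leftrightarrow +$, and your argument simply spells out that symmetric computation (showing $(\hat X\cap B_+/\C^*)\cap\P(V_+)=Y_+$ via the closed orbit $\{(tv_-,t^{-1}v_+)\}$ and the homothety action on $V_+^\vee$). No gaps.
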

Consider the blow-up $\beta: W\to V^\vee\git \C^*$ along the vertex of the affine cone $V^\vee \git \C^*$, as in Proposition \ref{remark:atiyahblowup}, with exceptional divisor $\P(V_-)\times \P(V_+)$. 

\begin{definition}\label{definition:mainresolution}
	Let $R:=\overline{\beta^{-1}((\hat{X}\git \C^*)\setminus 0)}$ be the strict transform of $\hat{X}\git \C^*$ under $\beta: W\to V^\vee\git \C^*$. 
\end{definition}

We abuse notation by denoting with $b_{\pm}: R\to \hat{X}\cap B_{\pm}/\C^*$ the restriction of the blow-up $b_{\pm}: W\to B_{\pm}/\C^*$.	Notice that $R\simeq \overline{b_{\pm}^{-1}((\hat{X}\cap B_{\pm}/\C^*) \setminus \hat{Y_{\pm}})}$, where again we abuse notation by denoting with $s_{\pm}: \hat{X}\cap B_{\pm} \to \hat{X}\git \C^*$ the restriction of $s_{\pm}: B_{\pm}/\C^*\to V^\vee\git \C^*$.  We obtain a diagram:
\[
\xymatrix{ & R \ar[ld]_{b_-} \ar[rd]^{b_+} \ar[dd]^<<<<<<\beta & \\ \hat X \cap B_-/\C^* \ar@{-->}[rr]^<<<<<<\psi \ar[rd]_{s_-} && \hat X \cap B_+/\C^* \ar[ld]^{s_+} \\ & \hat X \git \C^* & }
\]

\begin{proposition}\label{proposition:mainexceptionaldivisor}
	It holds that $b_{\pm}^{-1}(Y_{\pm})\simeq Y$. 
\end{proposition}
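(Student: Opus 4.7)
The plan is to identify $b_{\pm}^{-1}(Y_{\pm})$ with the intersection $R\cap\beta^{-1}(0)$, and then to recognise this intersection as $Y$ by describing $\hat X\git\C^*$ as an affine cone over the image of $(p_-,p_+):Y\to Y_-\times Y_+$. For the reduction, observe that from the commutative diagram just above the proposition one has $\beta=s_{\pm}\circ b_{\pm}$, and by (the proof of) Proposition~\ref{proposition:mainexceptionallocus} the small contractions $s_{\pm}$ send $Y_{\pm}$ onto the vertex $0\in\hat X\git\C^*$. Hence
\[
b_{\pm}^{-1}(Y_{\pm})=(s_{\pm}\circ b_{\pm})^{-1}(0)=\beta^{-1}(0)\cap R=R\cap\bigl(\P(V_-)\times\P(V_+)\bigr),
\]
where the identification of $\beta^{-1}(0)$ with $\P(V_-)\times\P(V_+)$ is Remark~\ref{remark:atiyahblowup}.

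Next I would describe $\hat X\git\C^*$ concretely. The ring of $\C^*$-invariants on $V^{\vee}$ for the weight-$(1,-1)$ action is generated by the monomials $y_ix_j$, embedding $V^{\vee}\git\C^*$ into $V_-^{\vee}\otimes V_+^{\vee}$ as the affine cone over the Segre-embedded $\P(V_-)\times\P(V_+)$. Since $\hat X$ is both $\C^*_h$-homogeneous and $\C^*$-stable, the quotient $\hat X\git\C^*$ is itself an affine cone over a projective subvariety $\tilde Y\subset\P(V_-)\times\P(V_+)$, and the strict transform $R=\overline{\beta^{-1}((\hat X\git\C^*)\setminus 0)}$ meets the exceptional divisor of $\beta$ precisely along $\tilde Y$.

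The core of the argument is to identify $\tilde Y$ with $Y$. A point $([v_-],[v_+])\in\tilde Y$ parametrises a closed $\C^*$-orbit $\C^*\cdot(v_-,v_+)\subset\hat X$ with both $v_{\pm}\ne 0$; its image in $X$ is a non-fixed $\C^*$-orbit closure with sink $[v_-]\in Y_-$ and source $[v_+]\in Y_+$. By Remark~\ref{remark:sections} every such closure has the form $\phi(\pi^{-1}(y))$ for some $y\in Y$, with limits $p_-(y)$ and $p_+(y)$; consequently $\tilde Y$ coincides with the image of the morphism $(p_-,p_+):Y\to Y_-\times Y_+\subset\P(V_-)\times\P(V_+)$. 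Using that $(p_-,p_+)$ is an embedding (a classical property of smooth Picard-rank-$2$ varieties with two projective bundle structures; see e.g.\ \cite{ORS}), one deduces $b_{\pm}^{-1}(Y_{\pm})\simeq\tilde Y\simeq Y$. I expect this last step to be the main obstacle: converting the orbit-theoretic description into the morphism $(p_-,p_+)$ requires careful bookkeeping of the entire drum construction, and the embeddedness of $(p_-,p_+)$ is an external input not immediately visible from the cobordism alone.
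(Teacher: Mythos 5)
Your reduction to $b_\pm^{-1}(Y_\pm)=R\cap\beta^{-1}(0)$ via the factorization $\beta=s_\pm\circ b_\pm$ and $s_\pm^{-1}(0)=Y_\pm$ is exactly the paper's first move (Steps 3--4 of its proof), but from there the two arguments genuinely diverge. The paper identifies the base of the cone $\hat{X}\git\C^*$ with $Y$ purely through quotients: it first shows $X/\C^*\simeq Y$ using the $\C^*$-equivariant contraction $\P(\cL_-\oplus\cL_+)\to X$ (the geometric quotient of a $\P^1$-bundle minus its two distinguished sections, with the fibrewise standard action, is the base $Y$), and then computes $\left(\hat{X}\git\C^*\setminus 0\right)/\C^*_h$ by taking the quotient of $\hat{X}\setminus(\hat{Y}_-\cup\hat{Y}_+)$ by the commuting actions $\C^*_h\times\C^*$ in the other order. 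You instead compute the invariant ring directly, realise $\hat{X}\git\C^*$ as a cone over a subvariety $\tilde{Y}$ of the Segre variety, and identify $\tilde{Y}$ with the image of $(p_-,p_+)\colon Y\to Y_-\times Y_+$ through the orbit-limit description of Remark \ref{remark:sections}. Both routes compute the same object; yours is more explicit about where the exceptional divisor sits inside $\P(V_-)\times\P(V_+)$, while the paper's gets the isomorphism with $Y$ from the equivariant resolution without ever needing to know that $(p_-,p_+)$ is an embedding.

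That last point is the one place your write-up needs repair. The embeddedness of $(p_-,p_+)$ is \emph{not} a property of arbitrary smooth Picard-number-$2$ varieties with two projective bundle structures: it relies on the third condition of Theorem \ref{theorem:smoothdrum}, namely $\deg(\cL_\mp|_{F_\pm})=1$. That condition forces $p_\pm|_{F_\mp}\colon F_\mp\to Y_\pm$ to be a linear embedding (a finite morphism from a projective space pulling the very ample $L_\pm$ back to $\cO(1)$), whence the fibres of $(p_-,p_+)$ are single points and its differential is injective, so $(p_-,p_+)$ is a closed immersion. In the smooth-drum setting of Set-up \ref{setup:maintheorem} this hypothesis is available, so your argument does close; but you should attribute the embedding to the degree-one condition rather than to a general property of two-bundle varieties, and verify that \cite{ORS} actually contains the statement in the generality you need before citing it for this step.
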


\begin{proof}
	We proceed by steps. First, let us denote by $X/\C^*$ the geometric quotient of $\left(X,L\right)$ under the $\C^*$-action, defined over the set of stable points $X\setminus (Y_-\cup Y_+)$ (see \cite[Proposition 2.9]{WORS3})
	\begin{description}
	\item [Step 1] We want to prove that $Y\simeq X/\C^*$.
	 Thanks to \cite[Remark 4.2]{WORS1}, the contraction $f:\P(\cE)\to X$ is $\C^*$-equivariant, in particular the geometric quotients of $(\P(\cE),\cO_{\P(\cE)}(1))$ and $(X,L)$ with respect to the $\C^*$-action are isomorphic. Since the former is a $\P^1$-bundle on $Y$, and therefore its geometric quotient is isomorphic to $Y$, we conclude.
	 \item [Step 2] We show that the GIT quotient $\hat{X}\git \C^*$ is the affine cone over over $Y$.
	 Let us recall that by $\C^*_h$ we denote the natural $\C^*$-action on the affine space $V^\vee$ given by the homoteties. We claim that 
		\[
		\left(\hat{X}\git \C^* \setminus 0\right)/\C^*_h\simeq Y.
		\]
		To this end, let us note that the two $\C^*$-actions commute over the open subset of the points stable under both the $\C^*$ and the $\C^*_h$ actions.
		Therefore we have that
		\begin{equation}\label{eq:quotients}
		\left(\hat{X}\git \C^* \setminus 0\right)/\C^*_h \simeq \left(\hat X \setminus\left(\hat Y_- \cup \hat Y_+\right)\right)/\left(\C^*_h \times \C^*\right).
		\end{equation}
		Notice that 
		\[
		\frac{\hat X \setminus \left(\hat Y_- \cup \hat Y_+\right)}{\C^*_h}=\frac{\left( \hat X \setminus 0\right) \setminus \left( \left(\hat Y_- \setminus 0\right) \cup \left(\hat Y_+ \setminus 0\right) \right)}{\C^*_h}\simeq X \setminus \left(Y_- \cup Y_+\right)
		\]
		and that
		\[
		\left(X \setminus \left(Y_- \cup Y_+\right)\right)/\C^*=X/\C^* \simeq Y.
		\]
		Then the right-hand side of \eqref{eq:quotients} is isomorphic to $Y$ and we conclude.
	\item [Step 3] We want to prove that $\beta^{-1}(0)=Y$.
	It follows immediately after recalling that we are considering the restriction of the blow-up map to $\hat{X}\git \C^*$, whose base of the cone is precisely $Y$.
	\item [Step 4] We show that  $s_{\pm} \circ b_{\pm}=\beta$ and that $s_{\pm}^{-1}(0)\simeq Y$.
	 The first claim follows directly from Lemma \ref{lemma:atiyahfactorization}. Since $s_{\pm}: B_{\pm}/\C^*\to \hat{X}\git \C^*$ are small contractions whose exceptional locus is $\P(V_{\pm})\cap \hat{X}=Y_{\pm}$ by Proposition \ref{proposition:mainexceptionallocus}, we conclude. \qedhere
	\end{description}
\end{proof}

We can now rephrase Theorem \ref{theorem:naive} as follows:
	
\begin{theorem}\label{theorem:maintheorem}
	With the notation of Set-up \ref{setup:maintheorem}, for any smooth drum $X$ constructed upon $(Y,\cL_-,\cL_+)$ there exist a rooftop flip $\psi: \hat{X}\cap B_-/\C^*\dashrightarrow \hat{X}\cap B_+/\C^*$ modeled by $Y$. 
\end{theorem}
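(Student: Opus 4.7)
The plan is to verify the three conditions of Definition \ref{def:Dflips} for the diagram already assembled, using the dictionary $W_\pm := \hat{X}\cap B_\pm/\C^*$, $W_0 := \hat{X}\git\C^*$, $W := R$, $Z_\pm := Y_\pm$, $Z := Y$, so that $Z_0$ reduces to the vertex $\{0\}$ of the affine cone, and the modeling variety is $\Lambda = Y$ with $\Lambda_\pm = Y_\pm$.

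Conditions (1) and (3) are almost immediate from the work already done. Proposition \ref{proposition:mainexceptionallocus} and its Corollary supply the small modification $\psi$ with $\Exc(\psi) = Y_-$ and $\Exc(\psi^{-1}) = Y_+$, while the small contractions $s_\pm$ from Lemma \ref{lemma:atiyahfactorization}, restricted to $\hat{X}\cap B_\pm/\C^*$, have exceptional loci $Y_\pm$ contracting to $0$. Since $Z_0$ is a single point, the smooth bundle condition on $s_\pm|_{Y_\pm}$ is trivial; and the fiber identifications in (3) reduce to $s_\pm^{-1}(0)\simeq Y_\pm$ (Proposition \ref{proposition:mainexceptionallocus}) together with $(b_\pm^{-1}\circ s_\pm^{-1})(0)\simeq Y$ (Proposition \ref{proposition:mainexceptionaldivisor}).

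For condition (2), I first verify that $Y = b_\pm^{-1}(Y_\pm)$ is a divisor in $R$: since $R$ is the strict transform of $\hat{X}\git\C^*$ under $\beta$, we have $\dim R = \dim \hat{X}\git\C^* = \dim X = \dim Y + 1$ by Remark \ref{remark:drumdimension}, so $Y$ has codimension one in $R$. It then remains to identify $b_\pm|_Y : Y \to Y_\pm$ with the original projective bundle maps $p_\pm$ from Set-up \ref{setup:smoothdrum}. The exceptional divisor of $\beta$ over the origin is $\P(V_-)\times \P(V_+)$, and its intersection with $R$ is $Y$, sitting inside $\P(V_-)\times \P(V_+)$ via the composition $Y \to Y_-\times Y_+ \hookrightarrow \P(V_-)\times \P(V_+)$ induced by $(p_-,p_+)$ and the embeddings $Y_\pm \hookrightarrow \P(V_\pm)$ coming from the very ample line bundles $L_\pm$. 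The two factor projections then restrict to $p_\pm$ on $Y$, which is the desired identification.

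The principal technical obstacle is precisely this identification of $b_\pm|_Y$ with $p_\pm$: it requires tracing the construction of $R$ as a strict transform inside $\Bl_0(V^\vee\git\C^*)$, and recognizing that the restriction of the exceptional divisor of $\beta$ to $R$ embeds $Y$ in $\P(V_-)\times \P(V_+)$ exactly through its drum structure, recovering the bundle maps as in Lemma \ref{lemma:gg}. Once this is settled, all three conditions of Definition \ref{def:Dflips} are satisfied and $\psi$ is a rooftop flip modeled by $Y$.
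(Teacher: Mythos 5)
Your proposal is correct and follows essentially the same route as the paper: the same dictionary $W_\pm=\hat{X}\cap B_\pm/\C^*$, $W_0=\hat{X}\git\C^*$, $W=R$, $Z=Y$, $Z_\pm=Y_\pm$, $Z_0=\{0\}$, with conditions (1) and (3) read off from Proposition \ref{proposition:mainexceptionallocus}, Lemma \ref{lemma:atiyahfactorization} and Proposition \ref{proposition:mainexceptionaldivisor}. Your extra care in condition (2) — identifying $b_\pm|_Y$ with $p_\pm$ via the embedding of $Y$ in $\P(V_-)\times\P(V_+)$ induced by $(p_-,p_+)$ — is a welcome elaboration of what the paper dispatches with ``by definition of smooth drum,'' but it is not a different argument.
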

	
\begin{proof}
	We verify each condition of Definition \ref{def:Dflips} is satisfied.
	\begin{enumerate}
	\item Using Proposition \ref{proposition:mainexceptionallocus}, the birational map $\psi: \hat{X}\cap B_-/\C^*\dashrightarrow \hat{X}\cap B_+/\C^*$ is a small modification with exceptional locus $Y_-$. The exceptional loci of $s_{\pm}: \hat{X}\cap B_{\pm}/\C^*\to \hat{X}\git \C^*$ are $Y_{\pm}$ and the restriction $s_{\pm}|_{Y_{\pm}}: Y_{\pm} \to 0$ are precisely $Y_{\pm}$-bundles. 
	\item If we consider the resolution $b_{\pm}: R\to \hat{X}\cap B_{\pm}/\C^*$ we have that $Y=b^{-1}_{\pm}(Y_{\pm})$ is a divisor in $R$, and $Y\to Y_{\pm}$ defines two projective bundle structures by definition of smooth drum.
	\item  In this case $Z_0=0$, and we know that $s^{-1}_{\pm}(0)\simeq Y_{\pm}$. Moreover $(b_{\pm}^{-1}\circ s_{\pm}^{-1})(0)\simeq Y$ by Proposition \ref{proposition:mainexceptionaldivisor}, hence we conclude. \qedhere
	\end{enumerate}
\end{proof}

\begin{corollary}
	The geometric quotients $\hat{X}\cap B_{\pm} /\C^*$ are smooth and in particular the rooftop flip $\psi: \hat{X}\cap B_-/\C^*\dashrightarrow \hat{X}\cap B_+/\C^*$ is a small $\Q$-factorial modification.
\end{corollary}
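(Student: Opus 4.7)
The plan is to follow exactly the strategy used at the end of Section \ref{sec:atiyah} for the Atiyah case, adapted to the drum setting. The key observation is that smoothness of the two quotients reduces to smoothness of the open subsets $\hat X \cap B_{\pm}$ together with freeness of the $\C^*$-action, after which \cite[Corollary p.~199]{MFK} applies verbatim; the $\Q$-factorial modification statement then becomes an immediate consequence of smoothness plus the smallness already proven in Theorem \ref{theorem:maintheorem}.

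First I would verify that $\hat X \cap B_{\pm}$ is smooth. Since $X$ is a smooth drum, the affine cone $\hat X \subset V^{\vee}$ is smooth away from its vertex, i.e.\ the origin of $V^{\vee} = V_-^{\vee}\oplus V_+^{\vee}$. By Remark \ref{remark:cobordism} the open sets $B_{\pm}$ are the complements in $V^{\vee}$ of the linear subspaces $V_{\pm}^{\vee}$, both of which contain the origin; consequently $0 \notin B_{\pm}$, so $\hat X \cap B_{\pm}$ avoids the only possible singular point of $\hat X$ and is therefore smooth.

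Next I would check that the $\C^*$-action on $\hat X \cap B_{\pm}$ is free. By Set-up \ref{setup:maintheorem} the action on $V^{\vee}$ is $t\cdot(v_-,v_+) = (tv_-, t^{-1}v_+)$. A point $(v_-,v_+) \in B_-$ satisfies $v_- \neq 0$, hence $t\cdot(v_-,v_+) = (v_-,v_+)$ forces $t=1$; symmetrically on $B_+$. Thus $\C^*$ acts with trivial isotropy on $B_{\pm}$, and the restriction to the invariant subvariety $\hat X\cap B_{\pm}$ inherits the freeness. Combining with the previous paragraph and applying \cite[Corollary p.~199]{MFK}, the geometric quotient maps $\hat X \cap B_{\pm} \to \hat X \cap B_{\pm}/\C^*$ are $\C^*$-principal bundles over smooth bases, so $\hat X\cap B_{\pm}/\C^*$ are smooth.

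Finally, for the second assertion, a smooth variety is automatically $\Q$-factorial, so both $\hat X \cap B_{\pm}/\C^*$ are $\Q$-factorial. Since Theorem \ref{theorem:maintheorem} has already established that $\psi$ is a small modification (condition (1) of Definition \ref{def:Dflips}), this is precisely the content of being a small $\Q$-factorial modification, concluding the proof. I do not anticipate any genuine obstacle here: every ingredient is already available, and the argument is a direct transcription of the corresponding corollary in the Atiyah case (\S\ref{ssec:atiyah}) to the smooth drum situation, with the only subtlety being the preliminary remark that $0 \notin B_{\pm}$ so that the cone singularity is automatically excised.
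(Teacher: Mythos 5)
Your proof is correct and follows essentially the same route as the paper's: smoothness of $\hat X\cap B_\pm$ from the cone singularity being only at the origin (which lies outside $B_\pm$), freeness of the action, then \cite[Corollary p.199]{MFK} to get a principal bundle and hence a smooth quotient, with $\Q$-factoriality following from smoothness. You merely spell out the freeness verification and the observation $0\notin B_\pm$, which the paper leaves implicit.
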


\begin{proof}
	Since the affine variety $\hat{X}$ has only a singularity at the origin, $\hat{X}\cap B_{\pm}$ is smooth. Moreover, the $\C^*$-action is free on $\hat{X}\cap B_{\pm}$, therefore using \cite[Corollary p.199]{MFK} $\hat{X}\cap B_{\pm}$ is a $\C^*$-principal bundle over $\hat{X}\cap B_{\pm}/\C^*$, hence they are also smooth. By definition $\psi$ is in particular a small $\Q$-factorial modification.
\end{proof}

We conclude by using Theorem \ref{theorem:maintheorem} to show that the smooth drum $Q^{2n}\subset \P^{2n+1}$ induces a rooftop flip modeled by $\P\left(T_{\P^n}\right)$. Notice that for $n=2$ this is the famous \emph{Mukai flop} (see \cite{HZ04}, \cite{WW}):

\begin{corollary}\label{corollary:rooftopflipquadric}
	Let $Q^{2n}\subset \P^{2n+1}$ be the smooth quadric hypersurface, viewed as the smooth drum constructed upon $\left(\P\left(T_{\P^n}\right), p^{*}_-\cO_{\P^n}(1), p^{*}_+\cO_{(\P^n)^\vee}(1)\right)$ (cf. Example \ref{example:quadric}). Then the small modification
	$$\psi: \hat{Q}^{2n}\cap B_-/\C^* \dashrightarrow \hat{Q}^{2n}\cap B_+/\C^*$$
	is a rooftop flip modeled by $\P\left(T_{\P^n}\right)$.
\end{corollary}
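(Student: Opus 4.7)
The plan is to reduce this to a direct application of Theorem \ref{theorem:maintheorem}. The hypothesis of that theorem requires starting from a smooth drum $X$ built upon some triple $(Y,\cL_-,\cL_+)$ satisfying the conditions of Set-up \ref{setup:smoothdrum}, and the conclusion produces a rooftop flip modeled by $Y$. So my job is to identify the hypothesis data in the case at hand, and then invoke the theorem.

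First, I would recall from Example \ref{example:quadric} that the smooth quadric hypersurface $Q^{2n}\subset \P^{2n+1}$ is a smooth drum whose associated Picard number $2$ variety is $Y=\P(T_{\P^n})$, with the two elementary contractions $p_-:\P(T_{\P^n})\to \P^n$ and $p_+:\P(T_{\P^n})\to (\P^n)^\vee$ given by the tautological realization $\P(T_{\P^n})=\{(x,H)\in\P^n\times(\P^n)^\vee\mid x\in H\}$. Setting $\cL_{\pm}:=p_{\pm}^{*}\cO(1)$, the triple $(\P(T_{\P^n}),\cL_-,\cL_+)$ satisfies the three bullet points of Theorem \ref{theorem:smoothdrum}: the two rays of $\Nef(\P(T_{\P^n}))$ are generated by $\cL_\pm$, the contractions $p_\pm$ are projective bundles, and the fiber degrees $\deg(\cL_\mp|_{F_\pm})=1$ are exactly what is needed for the Euler sequence description of $\P(T_{\P^n})$. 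Thus the data match those of Set-up \ref{setup:smoothdrum}.

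Next, I would match Set-up \ref{setup:maintheorem} to the present situation: taking $V_-=\HH^0(\P^n,\cO(1))$ and $V_+=\HH^0((\P^n)^\vee,\cO(1))$, the drum embeds by Remark \ref{remark:drumembedding} into $\P(V_-\oplus V_+)\simeq \P^{2n+1}$, and the affine cone $\hat Q^{2n}\subset V^\vee$ inherits the $\C^*$-action with weights $\pm 1$ on $V_\pm^\vee$ described in the Set-up. The open loci $\hat Q^{2n}\cap B_\pm$ are then precisely the semistable loci used to form the geometric quotients $\hat Q^{2n}\cap B_\pm/\C^*$, so the small modification $\psi$ in the statement is exactly the one produced by Proposition \ref{proposition:mainexceptionallocus}.

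Finally, I would invoke Theorem \ref{theorem:maintheorem} applied to the drum $X=Q^{2n}$: it yields that $\psi$ is a rooftop flip modeled by $Y=\P(T_{\P^n})$, which is the desired conclusion. There is no real obstacle here since all the work has been done in the main theorem; the only thing to check carefully is the identification of the drum $(Y,\cL_-,\cL_+)$ underlying $Q^{2n}$, which is supplied by Example \ref{example:quadric}. A brief closing remark could point out that the case $n=2$ recovers the classical Mukai flop, since $\P(T_{\P^2})$ is the flat model for the Mukai transformation between $\P^2$ and $(\P^2)^\vee$.
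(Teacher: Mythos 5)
Your proposal is correct and follows exactly the route the paper intends: the corollary is a direct application of Theorem \ref{theorem:maintheorem} to the drum structure on $Q^{2n}$ identified in Example \ref{example:quadric}, and your verification that the triple $\left(\P\left(T_{\P^n}\right), p^{*}_-\cO_{\P^n}(1), p^{*}_+\cO_{(\P^n)^\vee}(1)\right)$ satisfies Set-up \ref{setup:smoothdrum} and \ref{setup:maintheorem} is the only content needed. The paper itself offers no separate argument beyond this, so nothing is missing.
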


\bibliographystyle{plain}
\bibliography{bibliomin}
\end{document}